\documentclass[12pt]{amsart}
\usepackage{amsmath,amssymb,amsthm,amsfonts,enumerate}

\usepackage{float}
\usepackage{graphicx}
\usepackage{setspace}
\usepackage{MnSymbol}
\usepackage{amscd}
\usepackage{relsize}
\usepackage{hyperref}
\usepackage{xcolor}

\newtheorem{thm}{Theorem}[section]

\newtheorem{lem}[thm]{Lemma}
\newtheorem{cor}[thm]{Corollary}

\newcommand{\M}{\mathcal{M}}
\newcommand{\T}{\mathcal{T}}

\newcommand{\Z}{\mathbb{Z}}

\theoremstyle{definition}

\theoremstyle{definition}

\theoremstyle{remark}

\title[Generating the mapping class group by three torsions]
{Generating the mapping class group of a
nonorientable surface by three torsions}
\author{Marta Le\'sniak}
\author{B{\l}a\.zej Szepietowski}
\address{Institute of Mathematics, Faculty of Mathematics, Physics and Informatics, University of Gda\'nsk, 80-308 Gda\'nsk, Poland} 
\email{Marta.Lesniak@mat.ug.edu.pl}
\email{blaszep@mat.ug.edu.pl}

\begin{document}
\begin{abstract}
We prove that the mapping class group $\mathcal{M}(N_g)$ of a closed nonorientable surface of genus $g$ different than 4 is generated by three torsion elements. Moreover, for every even integer $k\ge 12$ and $g$ of the form $g=pk+2q(k-1)$ or $g=pk+2q(k-1)+1$, where $p,q$ are non-negative integers and $p$ is odd,   $\M(N_g)$ is generated by three conjugate elements of order $k$. Analogous results are proved for the subgroup of $\mathcal{M}(N_g)$  generated by Dehn twists.
\end{abstract}

\maketitle

\section{Introduction}
Let $\M(S_g)$ and $\M(N_g)$ denote mapping class groups of respectively orientable and nonorientable closed surface of genus $g$. It is well known that these groups are generated by elements of finite order. This fact was first proved for $\M(S_g)$ by Maclachlan \cite{McL2}, who used this result in his proof of simple connectivity of the moduli space of Riemann surfaces of genus $g$. Analogous result for $\M(N_g)$  was obtained by the second author \cite{Sz2}.

It is also known that $\M(S_g)$ is normally generated by one torsion element for $g\ge 3$, that is there exits 
$f\in\mathcal{M}(S_g)$ of finite order whose normal closure in $\mathcal{M}(S_g)$ is the entire group.  McCarthy-Papadopoulos \cite{MP} proved that $\mathcal{M}(S_g)$ is generated by conjugates of one involution, and Korkmaz \cite{MK2} showed that it is generated by two conjugate elements of order $4g+2$. More recently, Lanier and Margalit \cite{LM} proved that any non-trivial periodic mapping class other than the hyperelliptic involution is a normal generator of $\mathcal{M}(S_g)$ for $g\ge 3$. Lanier \cite{Lanier} showed that for any $k\ge 6$ and $g$ sufficiently large, $\M(S_g)$ can be generated by three conjugate elements of order $k$.

The first author \cite{ML} showed that also $\M(N_g)$ is normally generated by one torsion element if $g\ge 7$. Moreover, if $f\in\M(N_g)$ has finite order at least $3$, then the normal closure of $f$ in $\M(N_g)$ is either the entire mapping class group or its index two subgroup generated by Dehn twists, denoted by $\T(N_g)$ and called the {\it twist subgroup}. The second author \cite{SzGD} proved that $\M(N_g)$ is generated by three elements and by four involutions. Altun\"oz, Pamuk and Yildiz showed that $\T(N_g)$ is generated by  three torsion elements for $g\geq 13$, out of which two are involutions, while the order of the third one  depends on $g$ \cite{APY}. Similar result for odd $g$ was obtained by Du \cite{Du}.
The aim of this paper is to prove the following results.
\begin{thm}\label{thm1}
For $g\neq 4$, $\mathcal{M}(N_g)$ and $\mathcal{T}(N_g)$ are generated by three torsion elements.
\end{thm}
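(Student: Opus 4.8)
The plan is to build both generating triples from one mechanism: a periodic ``rotation'' $\rho$ of $N_g$ whose orbit of a single curve supports a known Dehn-twist generating set, exploiting the elementary fact that a Dehn twist composed with a rotation can again be periodic. I would first fix an explicit finite set of Dehn twists generating $\T(N_g)$, and use that $\T(N_g)$, having index two, is the kernel of a homomorphism $\M(N_g)\to\Z_2$. The target is a pair $(\rho,c)$ with $\{\rho^i(c)\}_i$ carrying the chosen generating set, so that $\langle\rho,T_c\rangle\supseteq\T(N_g)$, where $T_c$ denotes the Dehn twist about $c$.

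The engine turning this into torsion generators is modeled on $SL_2(\Z)=\langle S,\,ST_a\rangle$ with $S$ of order $4$ and $ST_a$ of order $6$: here $T_a=S^{-1}(ST_a)$ realizes a single Dehn twist as a product of two torsions. Accordingly I would construct a symmetric model of $N_g$ carrying a finite-order homeomorphism $\rho$ of even order $k$, assembled from $p$ pieces with $k$-fold symmetry and $2q$ transition pieces so that the genus equals $g=pk+2q(k-1)$ or $g=pk+2q(k-1)+1$; this accounts for the arithmetic in the companion statement. The central geometric lemma would assert that for a suitably placed curve $c$ the product $\sigma:=\rho T_c$ is again periodic, morally because inserting a half-twist along $c$ moves the rotation axis to a parallel one. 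Granting this, $T_c=\rho^{-1}\sigma$, so $\langle\rho,\sigma\rangle\supseteq\T(N_g)$ via the two torsions $\rho$ and $\sigma$; if a single orbit is insufficient, a second curve $c'$ contributes the torsion $\rho T_{c'}$, giving a generating triple for $\T(N_g)$. For $\M(N_g)$ I would instead arrange $\rho\notin\T(N_g)$, so that the same triple already surjects onto $\Z_2$ while containing $\T(N_g)$, forcing the subgroup to be all of $\M(N_g)$.

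I anticipate two principal obstacles. The hardest is verifying that $\{\rho^i(c)\}_i$ really generates $\T(N_g)$: this requires locating the orbit curves explicitly on the symmetric model and checking, through chain and lantern relations, that their twists contain a recognized generating system, which is a genus-by-genus bookkeeping problem. The second is establishing periodicity of $\sigma=\rho T_c$ and pinning down its order by identifying it with a rotation of an alternative symmetric model. Since the construction only realizes $g$ of the stated forms, the remaining values --- chiefly the small genera --- demand separate explicit triples, while for the twist subgroup with $g\ge 13$ one may invoke the theorem of Altun\"oz--Pamuk--Yildiz. A direct check that no torsion triple exists when $g=4$ explains its exclusion.
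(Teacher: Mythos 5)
Your core engine---realizing a Dehn twist as a quotient of two torsion elements, on the model of $SL_2(\Z)=\langle S, ST\rangle$---is exactly the mechanism the paper uses, but your instantiation of it has a genuine gap. You take $\rho$ to be an honest finite-order rotation of a symmetric model and assert, as your central lemma, that $\sigma=\rho T_c$ is again periodic for a suitably placed curve $c$, ``morally because inserting a half-twist moves the rotation axis.'' This is the crux and it is unproven; moreover it is not true in the generality you need, since a Dehn twist is not a rotation and composing a finite-order map with a twist generically yields an infinite-order reducible class --- the torus is special in that every twist already lies in the finite-index torsion-generated picture of $SL_2(\Z)$. The correct higher-genus incarnation of your $SL_2(\Z)$ fact is the chain relation, and that is precisely what the paper does: it takes \emph{both} torsions to be products of twists along the chain $(\alpha_1,\dots,\alpha_{g-1})$, namely $s=t_{\alpha_1}t_{\alpha_2}\cdots t_{\alpha_{g-1}}$ and $s'=t_{\alpha_1}^2t_{\alpha_2}\cdots t_{\alpha_{g-1}}=t_{\alpha_1}s$, whose finite orders ($g$ and $g-1$ for even $g$, $2g$ and $2(g-1)$ for odd $g$) are known from the Paris--Szepietowski presentation; then $t_{\alpha_1}=s's^{-1}$, and since $s(\alpha_i)=\alpha_{i+1}$ all the chain twists follow. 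The third torsion is a conjugate $xrx^{-1}$ of the crosscap rotation $r=u_1\cdots u_{g-1}$ (or of $r'=u_2\cdots u_{g-1}$, depending on the parity of $g$), with $x$ chosen explicitly so that the orbit of $\alpha_3$ under the resulting group also contains $\beta$ and $\epsilon$ from Omori's generating set, and so that this conjugate has $\det=-1$ on $H_1$, which by the determinant criterion forces the group to be all of $\M(N_g)$; for $\T(N_g)$ one simply replaces $xrx^{-1}$ by $xsx^{-1}$. Until you either prove your periodicity lemma or replace $\rho$ and $\sigma$ by such chain products, your argument does not close.

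Two further defects. First, the symmetric-model arithmetic $g=pk+2q(k-1)$ or $g=pk+2q(k-1)+1$ belongs to the paper's Theorem 2 (three \emph{conjugate} generators of a fixed order $k$); for the present statement the orders may depend on $g$, so the case $p=1$, $q=0$ (i.e.\ $k=g$ or $k=g-1$) already covers every genus, and no appeal to Altun\"oz--Pamuk--Yildiz is needed --- which is fortunate, because that result only covers $\T(N_g)$ for $g\ge 13$, while your ``separate explicit triples'' for the small genera are never produced; the paper treats $g\le 3$ and $g=5$ by hand from known presentations (e.g.\ $\M(N_5)=\langle s, st_\beta, y^{-1}r'y\rangle$). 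Second, your closing claim that a ``direct check'' shows no torsion triple exists for $g=4$ is wrong as stated: the abelianization $\Z_2\times\Z$ of $\T(N_4)$ shows $\T(N_4)$ is not generated by torsion elements at all, but whether $\M(N_4)$ is generated by three torsions is explicitly left open in the paper.
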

By \cite[Proposition 7.16]{MS} the abelianization of $\T(N_4)$ is $\Z_2\times\Z$, and it follows that $\T(N_4)$ is not generated by torsion elements. On the other hand, the abelianization of $\M(N_4)$ is $\Z_2^3$ \cite{KorkH1}, but we do not know whether $\M(N_4)$ can be generated by three torsions.
The orders of the three torsion generators of $\mathcal{M}(N_g)$ and $\mathcal{T}(N_g)$ given in the proof of Theorem \ref{thm1} depend on $g$. Our next result shows that for sufficiently large $g$, $\M(N_g)$ and $\T(N_g)$ are generated by three conjugate torsion elements. It is inspired by Lanier's work \cite{Lanier}.

\begin{thm}\label{thm2}
Let $k\geq 12$ and  $g=pk+2q(k-1)$ or $g=pk+2q(k-1)+1$, where $p,q\in\mathbb{Z}_{\geq 0}$ and $p\ge 1$. There exist three conjugate elements of $\M(N_g)$ of order $k$ such that the subgroup of $\mathcal{M}(N_g)$  generated by these elements is either $\T(N_g)$ or $\M(N_g)$, the latter if and only if $k$ is even and $p$ is odd.
\end{thm}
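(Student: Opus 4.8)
The plan is to adapt Lanier's strategy \cite{Lanier} to the nonorientable setting. First I would construct an explicit model of $N_g$ carrying a periodic mapping class $f$ of order $k$. The genus formula $g=pk+2q(k-1)$ or $g=pk+2q(k-1)+1$ is the signature of an equivariant decomposition: I would assemble $N_g$ from $p$ ``blocks'' each invariant under a fixed $\Z_k$-rotation and contributing $k$ to the nonorientable genus, together with $q$ blocks each contributing $2(k-1)$, the optional ``$+1$'' corresponding to a single crosscap fixed by the rotation on its axis. An Euler-characteristic (Riemann--Hurwitz) bookkeeping confirms that the resulting surface has the stated genus and that $f$ has order exactly $k$. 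This $f$ is the common conjugacy type of the three generators.

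Second, I would produce Dehn twists inside the group generated by conjugates of $f$. Writing $f_1=f$, $f_2=h_1 f h_1^{-1}$, $f_3=h_2 f h_2^{-1}$ for carefully chosen $h_1,h_2$, the key computation is to exhibit a short word in the $f_i$ (typically of the form $f_i f_j^{-1}$, up to a suitable power) that equals a single Dehn twist $t_c$, or at worst a product of twists from which one twist can be isolated. Once a single twist $t_c\in\langle f_1,f_2,f_3\rangle$ is available, I would exploit that $f$ permutes the curves $\{f^i(c)\}$: the conjugates $f^i t_c f^{-i}=t_{f^i(c)}$ all lie in the group, and the curve $c$ is arranged so that its $f$-orbit is, or contains, one of the known configurations whose twists generate $\T(N_g)$ (in the spirit of the Dehn twist generating sets used in \cite{SzGD}). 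This yields $\T(N_g)\subseteq\langle f_1,f_2,f_3\rangle$. The role of the third conjugate $f_3$ is to supply the extra curves needed to complete such a configuration once two conjugates alone cover only part of the surface.

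Third, the dichotomy between $\T(N_g)$ and $\M(N_g)$ follows from the canonical epimorphism $\epsilon\colon\M(N_g)\to\Z_2$ with kernel $\T(N_g)$, under which Dehn twists vanish and crosscap slides map to $1$. Since $f_1,f_2,f_3$ are conjugate they share a common image $\epsilon(f)$, so $\langle f_1,f_2,f_3\rangle$ surjects onto $\Z_2$ precisely when $\epsilon(f)\neq 0$; combined with the inclusion $\T(N_g)\subseteq\langle f_1,f_2,f_3\rangle$ from the previous step (and consistent with the normal-closure dichotomy of \cite{ML}), this gives $\langle f_1,f_2,f_3\rangle=\T(N_g)$ when $\epsilon(f)=0$ and $\langle f_1,f_2,f_3\rangle=\M(N_g)$ when $\epsilon(f)\neq 0$. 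It then remains to compute $\epsilon(f)$ by expressing $f$ through twists and crosscap slides: each block of type $2(k-1)$, and each block of type $k$ when $k$ is odd, contributes an even number of crosscap slides, whereas a block of type $k$ contributes an odd count exactly when $k$ is even. Hence $\epsilon(f)\neq 0$ if and only if $k$ is even and $p$ is odd, as claimed.

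The main obstacle is the second step: choosing conjugating homeomorphisms $h_1,h_2$ for which an explicit word in the $f_i$ is a single Dehn twist, while simultaneously arranging the $f$-orbit of the resulting curve $c$ to be a genuine generating configuration for $\T(N_g)$ on the specific model surface. Both the equivariant construction (so that $f$ really has order $k$ and the genus is exactly $pk+2q(k-1)$ or $pk+2q(k-1)+1$) and the verification that three conjugates suffice for every admissible pair $(p,q)$ demand delicate, case-by-case curve diagrams; by contrast, the parity computation of $\epsilon(f)$ and the reduction to a known twist-generating set should be comparatively routine once the model and the twist $t_c$ are in hand.
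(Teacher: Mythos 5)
Your proposal follows essentially the same route as the paper: the identical equivariant model ($p$ rotationally symmetric copies of $N_k$, $q$ copies of Lanier's orientable genus-$(k-1)$ block, plus an optional crosscap on the axis, as in Lemma \ref{embed}), Lanier's lantern-relation mechanism for extracting a single Dehn twist from a word in conjugates of the rotation (Lemma \ref{lem1}), an orbit argument reducing to a known twist generating set (the paper uses Omori's $g+1$ twists, Theorem \ref{thm:Genki}), and the index-two dichotomy settled by a parity computation (the paper computes $\det(r_*)=(-1)^p$ via Stukow's criterion, which is equivalent to your crosscap-slide count). The step you flag as the main obstacle---choosing the conjugating homeomorphisms and verifying the lantern and orbit conditions---is indeed where all of the paper's technical work lies (carried out case by case using Lemmas \ref{lem:lantchain} and \ref{lem:chains}), but the plan you outline for it is exactly the one the paper executes.
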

In particular, by setting $p=1$ and $q=0$ in Theorem \ref{thm2} we obtain the following.

\begin{cor}
(a) For even $g\ge 12$, $\M(N_g)$ is generated by three conjugate elements of order $g$, and $\T(N_g)$ is generated by three  elements of order $g-1$ conjugate in $\M(N_g)$.

(b) For odd $g\ge 13$, $\M(N_g)$ is generated by three conjugate elements of order $g-1$, and $\T(N_g)$ is generated by three  elements of order $g$ conjugate in $\M(N_g)$.
\end{cor}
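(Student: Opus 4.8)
The plan is to obtain this corollary as the special case $p=1$, $q=0$ of Theorem~\ref{thm2}, so the only real work is to match the parities of $g$ and $k$ and to check the hypothesis $k\ge 12$. With $p=1$ and $q=0$ the two admissible expressions for the genus collapse to $g=pk+2q(k-1)=k$ and $g=pk+2q(k-1)+1=k+1$. Hence a fixed genus $g$ is realized either with $k=g$ (first form) or with $k=g-1$ (second form), and in both cases $p=1$ is odd. Since $p$ is then always odd, the dichotomy in Theorem~\ref{thm2} simplifies: the three conjugate elements of order $k$ generate $\M(N_g)$ exactly when $k$ is even, and generate $\T(N_g)$ exactly when $k$ is odd. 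Note also that ``conjugate in $\M(N_g)$'' is inherited verbatim from Theorem~\ref{thm2}, so nothing extra is needed there.

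For part~(a), let $g\ge 12$ be even. To generate $\M(N_g)$ I would take $k=g$ (even) via the first form; since $k$ is even and $p=1$ is odd, Theorem~\ref{thm2} gives three conjugate elements of order $g$ generating $\M(N_g)$, and the hypothesis $k=g\ge 12$ holds. To generate $\T(N_g)$ I would take $k=g-1$ (which is odd because $g$ is even) via the second form; since $k$ is odd the subgroup generated is the twist subgroup, yielding three elements of order $g-1$ that are conjugate in $\M(N_g)$.

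For part~(b), let $g\ge 13$ be odd. For $\M(N_g)$ take $k=g-1$ (even, as $g$ is odd) via the second form, again with $p=1$ odd; this produces three conjugate elements of order $g-1$ generating $\M(N_g)$, and $k=g-1\ge 12$. For $\T(N_g)$ take $k=g$ (odd) via the first form, so the subgroup generated is $\T(N_g)$, giving three elements of order $g$ conjugate in $\M(N_g)$, with $k=g\ge 13$.

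The verifications above are entirely routine, so I do not expect a genuine conceptual obstacle; the one point requiring care is the constraint $k\ge 12$ in Theorem~\ref{thm2}. In three of the four assertions the relevant $k$ equals $g$ or $g-1$ with $g\ge 13$, so $k\ge 12$ is automatic. The single borderline case is the twist subgroup in part~(a) at $g=12$, where $k=g-1=11$ falls just below the range of Theorem~\ref{thm2}; I would handle this either by restricting that assertion to even $g\ge 14$ or by supplying a direct construction at $g=12$, all larger even $g$ being covered by the argument above.
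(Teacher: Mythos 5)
Your proposal is correct and is exactly the paper's argument: the corollary is obtained by specializing Theorem~\ref{thm2} to $p=1$, $q=0$, so that $g=k$ or $g=k+1$, and then choosing $k=g$ or $k=g-1$ according to the parity of $g$, with the $\M(N_g)$ versus $\T(N_g)$ dichotomy governed (since $p=1$ is odd) solely by the parity of $k$. Your flag on the borderline case is moreover a sharp and legitimate catch that the paper silently glosses over: for $g=12$ the twist-subgroup assertion in part~(a) requires $k=11$, which violates the hypothesis $k\ge 12$ of Theorem~\ref{thm2} as stated, so strictly speaking that single instance needs either the restriction to even $g\ge 14$ or a direct verification that the construction in the proof of Theorem~\ref{thm2} still goes through at $k=11$, just as you propose.
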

Since $\M(N_g)$ contains a subgroup of index two, namely the twist subgroup, it cannot be generated by elements of odd order. We also remark that for $2\le g\le 6$, $\M(N_g)$ is not normally generated by a single element, since its abelianization is not cyclic \cite{KorkH1}.

\begin{cor}\label{cor1}
	If $k\geq 12$ is even and $g\geq 2(k-1)(k-2)+k$, then there exists a set of three conjugate elements of order $k$ generating $\mathcal{M}(N_g)$.
\end{cor}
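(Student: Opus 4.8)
The plan is to derive this directly from Theorem \ref{thm2}, reducing the statement to an elementary numerical representation problem. Recall that Theorem \ref{thm2} produces three conjugate elements of order $k$ generating $\M(N_g)$ exactly when $k$ is even and $g$ admits a representation $g=pk+2q(k-1)$ or $g=pk+2q(k-1)+1$ with $p,q\in\Z_{\ge 0}$ and $p$ odd. Thus, given $k\ge 12$ even, it suffices to show that every $g\ge 2(k-1)(k-2)+k$ admits such a representation with $p$ odd.

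First I would eliminate the parity constraint by writing $p=2p'+1$ with $p'\ge 0$, so that $pk+2q(k-1)=k+2\bigl(p'k+q(k-1)\bigr)$, and similarly for the form with the extra $+1$. Since $k$ is even, the first form then yields only even values of $g$ and the second form only odd values; hence the two forms together can accommodate either parity of $g$. Isolating, I am left needing to represent the integer $m=(g-k)/2$ (when $g$ is even) or $m=(g-k-1)/2$ (when $g$ is odd) as a non-negative combination $m=p'k+q(k-1)$.

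For this last step I would invoke the Frobenius/Chicken McNugget theorem: since $\gcd(k,k-1)=1$, every integer at least $(k-1)(k-2)$ is expressible as $p'k+q(k-1)$ with $p',q\ge 0$. It then remains only to check that the hypothesis $g\ge 2(k-1)(k-2)+k$ forces $m\ge (k-1)(k-2)$ in both cases. For even $g$ this is immediate, as $m\ge(k-1)(k-2)$ is equivalent to $g\ge 2(k-1)(k-2)+k$. For odd $g$, since $2(k-1)(k-2)+k$ is even (because $k$ is even), the bound in fact gives $g\ge 2(k-1)(k-2)+k+1$, whence $m=(g-k-1)/2\ge(k-1)(k-2)$ as well. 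In either case we obtain $p=2p'+1\ge 1$ odd, and Theorem \ref{thm2} supplies the desired three conjugate generators. The argument is routine; the only point demanding a little care is the parity bookkeeping that guarantees the single threshold $2(k-1)(k-2)+k$ suffices simultaneously for even and odd $g$.
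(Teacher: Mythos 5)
Your proposal is correct and follows essentially the same route as the paper: both reduce the problem, via the parity of $g$ and writing $p$ odd, to representing $(g-k)/2$ (for $g$ even) or $(g-k-1)/2$ (for $g$ odd) as a non-negative combination $p'k+q(k-1)$, with the threshold $2(k-1)(k-2)+k$ arising exactly from the Frobenius bound $(k-1)(k-2)$ for the coprime pair $(k,k-1)$. The only difference is cosmetic: the paper proves the representability by an explicit division-with-remainder computation, whereas you invoke the Frobenius/Chicken McNugget theorem, which that computation effectively reproves.
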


\section{Preliminaries}

Let $N_g$ be a nonorientable surface of genus $g$.
We represent $N_g$ as a sphere with $g$ crosscaps (Figure \ref{fig:genkicurves}). In all figures of this paper a shaded, crossed disc represents a crosscap.
This means that the interior of each such disc should be removed, and then antipodal points on the resulting boundary component should be identified.  
The mapping class group $\M(N_g)$ is the group of isotopy classes of homeomorphisms of $N_g$.

By a curve in $N_g$ we understand a simple closed curve. We do not distinguish a curve from its isotopy class. A curve is two-sided if its regular neighbourhood is an annulus and one-sided if it is a M\"obius strip. Moreover, a curve $\alpha$ is nonseparating if its complement $N\setminus\alpha$ is connected and separating otherwise.

If $\alpha$ and $\beta$ are curves, we denote by $i(\alpha,\beta)$ their geometric intersection number.

A sequence of two-sided curves $(\alpha_1,\dots,\alpha_n)$ is called an $n$-chain if for every $1\leq i\leq n-1$, $i(\alpha_{i},\alpha_{i+1})=1$ and $i(\alpha_{i},\alpha_j)=0$ for $|i-j|\geq 2$.
Let $C=(\alpha_i)_{i=1}^n$ be an $n$-chain on $N$.
By a regular neighbourhood of $C$ we understand a regular neighbourhood of $\bigcup_{i=1}^n\alpha_i$.
We say that $C$ is nonseparating if $N\setminus\bigcup_{i=1}^n\alpha_i$ is connected. Note that a chain of even length is always nonseparating.

If $\alpha$ is a  two-sided curve on $N_g$, denote by $t_\alpha$ a Dehn twist about $\alpha$. Because we are working on a nonorientable surface, the Dehn twist $t_\alpha$ can be one of two, depending on which orientation of a regular neighbourhood of $a$ we choose. Then the Dehn twist in the opposite direction is $t_\alpha^{-1}$. For any $f\in\mathcal{M}(N_g)$ we have

\begin{equation*}
ft_\alpha f^{-1}=t_{f(\alpha)}^\epsilon
\end{equation*}
where $\epsilon\in\{-1,1\}$, again depending on which orientation of a regular neighbourhood of $f(\alpha)$ we choose. The subgroup of $\mathcal{M}(N_g)$ generated by all Dehn twists has index two. We call it the twist subgroup and denote by $\mathcal{T}(N_g)$. 

Consider the action of $\mathcal{M}(N_g)$ on $H_1(N_g;\mathbb{R})$. For $f\in\mathcal{M}(N_g)$ let $f_*:H_1(N_g;\mathbb{R})\rightarrow H_1(N_g;\mathbb{R})$ be the induced homomorphism.

\begin{thm}\label{thm:twsb}\cite{MS}
For $f\in \mathcal{M}(N_g)$, $\det(f_*)=1$ if and only if $f\in\mathcal{T}(N_g)$.
\end{thm}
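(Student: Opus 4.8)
The plan is to realize the determinant as a homomorphism and identify its kernel with the twist subgroup by comparing both on a generating set. Since $f_*$ preserves the lattice $H_1(N_g;\Z)/\mathrm{tors}\cong\Z^{g-1}$ and is invertible over $\Z$, the assignment $d\colon\M(N_g)\to\{\pm1\}$, $d(f)=\det(f_*)$, is a well-defined homomorphism into $\{\pm1\}$ (the real determinant equals that of the integral matrix in $GL_{g-1}(\Z)$). Let $\pi\colon\M(N_g)\to\M(N_g)/\T(N_g)\cong\Z_2$ be the quotient homomorphism; the theorem is exactly the assertion $\ker d=\ker\pi=\T(N_g)$. I would use that $\M(N_g)$ is generated by Dehn twists together with one crosscap slide (the Lickorish generators), so that it suffices to evaluate $d$ on these.

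First I would treat the Dehn twists, showing $d(t_\alpha)=1$ for every two-sided curve $\alpha$, which gives $\T(N_g)\subseteq\ker d$. The twist $t_\alpha$ is supported in an annular, hence orientable, regular neighbourhood $A$ of $\alpha$, so the Picard--Lefschetz formula applies locally: for $x\in H_1(N_g;\mathbb{R})$ one has $t_\alpha(x)=x+\hat\imath(x,\alpha)[\alpha]$, where $\hat\imath(x,\alpha)$ is the algebraic intersection number computed using an orientation of $A$. The one subtlety in the nonorientable setting is that $[\alpha]$ and the sign of $\hat\imath(x,\alpha)$ each depend on the chosen orientation of $A$, but their product $\hat\imath(x,\alpha)[\alpha]$ does not, so the formula is well defined without a global orientation. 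Thus $(t_\alpha)_*$ is the identity plus the rank-one operator $x\mapsto\hat\imath(x,\alpha)[\alpha]$, whence $\det((t_\alpha)_*)=1+\hat\imath(\alpha,\alpha)$; and $\hat\imath(\alpha,\alpha)=0$ because a parallel copy of $\alpha$ inside $A$ is disjoint from $\alpha$. Hence $d(t_\alpha)=1$.

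Next I would compute $d$ on a crosscap slide $Y$, which is supported in a one-holed Klein bottle $K$ containing a one-sided curve $\mu$ and a two-sided curve $a$ with $i(\mu,a)=1$, and acts as the identity off $K$. A direct computation of $Y_*$ on $H_1(K)=\Z[\mu]\oplus\Z[a]$, using the relation $Y^2=t_{\partial K}$ together with the local model of the slide to pin down the signs, yields $\det(Y_*)=-1$ on $H_1(N_g;\mathbb{R})$; already on the closed Klein bottle $Y$ acts by $-1$ on $H_1(N_2;\Z)/\mathrm{tors}\cong\Z$. Hence $d(Y)=-1$, so $d$ is surjective and $\ker d$ has index two in $\M(N_g)$. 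Combining the two computations, $\T(N_g)\subseteq\ker d$ and both subgroups have index two, so $\T(N_g)=\ker d$, which is the claim; equivalently, $d$ and $\pi$ agree on the Lickorish generators and therefore coincide.

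I expect the sign bookkeeping to be the main obstacle: establishing the orientation-independence of the transvection formula for $t_\alpha$ on a nonorientable surface, and, above all, correctly computing the crosscap slide action to confirm $\det(Y_*)=-1$ (were this to come out $+1$ the statement would be false, so this step carries the real content of the ``only if'' direction). The remaining ingredients --- that $f_*$ lands in $GL_{g-1}(\Z)$, that $\T(N_g)$ has index two, and that Dehn twists and one crosscap slide generate $\M(N_g)$ --- are standard and reduce everything to these two local homology calculations.
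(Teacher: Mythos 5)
The paper does not actually prove this statement: Theorem~\ref{thm:twsb} is imported from Stukow's paper \cite{MS} as a known result, so there is no internal proof to compare your attempt against. That said, your proposal is a correct reconstruction of the classical argument by which this fact is established in the literature (going back to Lickorish's work and used by Stukow): $d(f)=\det(f_*)$ defines a homomorphism $\mathcal{M}(N_g)\to\{\pm 1\}$ because $f_*$ preserves $H_1(N_g;\Z)/\mathrm{tors}\cong\Z^{g-1}$; every Dehn twist acts as a homological transvection $x\mapsto x+\hat\imath(x,\alpha)[\alpha]$ with $\hat\imath(\alpha,\alpha)=0$, hence has determinant $1$; the crosscap slide $Y$ satisfies $\det(Y_*)=-1$ (in a crosscap basis $c_1,\dots,c_{g-1}$ it acts by $c_1\mapsto -c_1$ modulo the span of the other classes and fixes a complement); and $\mathcal{M}(N_g)$ is generated by Dehn twists together with $Y$. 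Two caveats. First, your well-definedness discussion of the transvection formula is slightly garbled: $[\alpha]$ depends on a chosen orientation of $\alpha$, not of the annulus $A$; the clean statement is that for the fixed homeomorphism $t_\alpha$ the formula holds with $\hat\imath$ computed using the orientation of $A$ for which $t_\alpha$ is right-handed. This does not affect the conclusion, since for any such rank-one perturbation $\det\left((t_\alpha)_*\right)=1+\hat\imath(\alpha,\alpha)=1$.

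Second, watch for circularity: the fact that $[\mathcal{M}(N_g):\mathcal{T}(N_g)]=2$, which you invoke as a standard input, is classically proved by exactly this determinant computation (it is how one shows $Y$ is not a product of twists). The non-circular arrangement uses only the inequality $[\mathcal{M}(N_g):\mathcal{T}(N_g)]\le 2$, which follows from Lickorish's generation theorem together with the facts that $ft_\alpha f^{-1}=t_{f(\alpha)}^{\pm 1}$ (so $\mathcal{T}(N_g)$ is normal) and that $Y^2$ is a Dehn twist about the boundary of the one-holed Klein bottle supporting $Y$. Combining $\mathcal{T}(N_g)\subseteq\ker d$, $d(Y)=-1$, and this inequality then forces $\mathcal{T}(N_g)=\ker d$. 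With that rearrangement your outline is sound, and the real mathematical content is, as you correctly identify, the two local homology computations.
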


\begin{figure}
\begin{center}
\includegraphics[scale=.4]{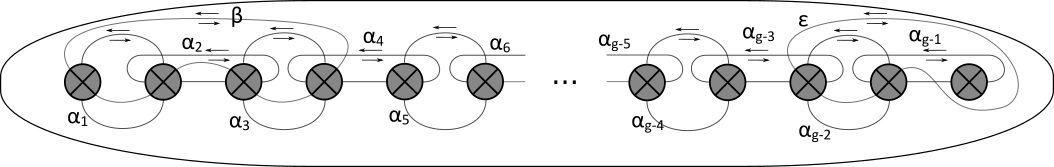}
\caption{Curves corresponding to generators of the twist subgroup $\T(N_g)$.}
\label{fig:genkicurves}
\end{center}
\end{figure}

Building on Stukow's results \cite{MS,MS2}, Omori \cite{Genki} proved that $\mathcal{T}(N_{g})$ is generated by  $g+1$ Dehn twists.
\begin{thm}\cite{Genki}\label{thm:Genki}
For $g\geq 4$, $\mathcal{T}(N_{g})$ is generated by Dehn twists about the curves $\alpha_1$,..., $\alpha_{g-1}$, $\beta$ and $\epsilon$ shown on Figure \ref{fig:genkicurves}.
\end{thm}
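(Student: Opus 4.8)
The plan is to prove the inclusion $\T(N_g)\subseteq G$, where $G=\lr{t_{\alpha_1},\dots,t_{\alpha_{g-1}},t_\beta,t_\epsilon}$, the reverse inclusion being immediate since every generator of $G$ is a Dehn twist. To this end I would start from a \emph{known} generating set of $\T(N_g)$ consisting of Dehn twists---for instance one of Stukow's sets \cite{MS,MS2}, which lists twists about a finite family of two-sided curves of a few prescribed topological types---and show that each twist in that family already lies in $G$. By the change of coordinates principle, two twists $t_\gamma$ and $t_{\gamma'}$ about curves of the same topological type are conjugate in $\M(N_g)$; hence it suffices (i) to place in $G$ one representative twist for each type occurring in Stukow's set, and (ii) to produce inside $G$ enough conjugating elements to carry each representative curve onto all the others of its type.

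For step (ii) the main engine would be the chain $\alpha_1,\dots,\alpha_{g-1}$. Chain relations applied to consecutive subchains yield, on the one hand, Dehn twists about the boundary curves of regular neighbourhoods of these subchains (curves bounding orientable subsurfaces), and on the other hand ``braid-type'' products such as $t_{\alpha_i}t_{\alpha_{i+1}}t_{\alpha_i}$ that act as homeomorphisms permuting adjacent crosscaps while preserving the chain setwise. These permuting elements are genuine products of Dehn twists, hence lie in $G$ and in $\T(N_g)$, and they realise enough of the crosscap-permuting symmetry to transport the chain curves and their neighbours onto one another. The curves $\beta$ and $\epsilon$ supply the directions transverse to the chain: together with suitable $\alpha_i$ they bound four-holed spheres on which the lantern relation expresses a twist about a new curve as a product of twists already in $G$, and they give access to the curves encircling pairs of crosscaps that a single chain cannot reach.

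Assembling these pieces, I would argue type by type through Stukow's list: for each topological type, exhibit one curve of that type as the boundary of a neighbourhood of a subchain (handled by chain relations) or as a lantern/boundary curve involving $\beta$ or $\epsilon$, thereby putting its twist in $G$; then apply the crosscap-permuting elements of $G$ to sweep out every other curve of the same type. Once every generator of Stukow's set is shown to lie in $G$, we conclude $G=\T(N_g)$.

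I expect the hardest part to be step (ii) in the nonorientable setting. On an orientable surface one would freely use half-twists and handle swaps, but here the conjugators must stay inside the twist subgroup: crosscap slides (Y-homeomorphisms), the natural moves permuting crosscaps, do \emph{not} belong to $\T(N_g)$, so every symmetry used must be rebuilt as an honest product of the given Dehn twists and then verified---e.g.\ via its action on $H_1(N_g;\mathbb{R})$ as in Theorem \ref{thm:twsb}, or by a direct isotopy---to have the intended effect on curves. Keeping careful track of which curves are reachable from the chain together with $\beta$ and $\epsilon$, and confirming that no topological type in Stukow's list is missed, is the delicate bookkeeping at the centre of the argument.
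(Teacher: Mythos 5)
The paper itself does not prove this statement: it is imported wholesale from Omori \cite{Genki}, and the only original content in the paper is the remark that the configuration of Figure \ref{fig:genkicurves} differs from Omori's by applying $(t_{\alpha_1}t_{\alpha_2}\cdots t_{\alpha_{g-1}})^{g-4}$ to $\beta$; since that conjugator is itself a product of the listed twists, generation by Omori's set and generation by the set in the figure are equivalent. Your proposal instead sets out to reprove Omori's theorem, and in outline it follows the same route as the cited source: start from a known generating set of $\T(N_g)$ by Dehn twists (Stukow's, building on Chillingworth), and use chain and lantern relations to express each of its members through the $g+1$ given twists. So the strategy is the right one and is, in substance, how the result is actually proved in the literature.

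As written, however, your text is a plan rather than a proof, and two specific points would fail or remain unestablished. First, the claim that braid-type products such as $t_{\alpha_i}t_{\alpha_{i+1}}t_{\alpha_i}$ act by ``permuting adjacent crosscaps'' is incorrect: such products transport the chain curves onto one another (e.g. $t_{\alpha_i}t_{\alpha_{i+1}}(\alpha_i)=\alpha_{i+1}$), but they do not transpose crosscaps; the crosscap transpositions $u_i$ lie outside $\T(N_g)$ by Theorem \ref{thm:twsb}, and only even products such as $u_iu_j$ belong to the twist subgroup, so the ``crosscap-permuting symmetry'' you invoke must be manufactured inside $G=\lr{t_{\alpha_1},\dots,t_{\alpha_{g-1}},t_\beta,t_\epsilon}$ by other means --- and whether that is possible \emph{before} knowing $G=\T(N_g)$ is exactly the content of the theorem, not a tool available for proving it. Second, change of coordinates only yields $ft_\gamma f^{-1}=t_{f(\gamma)}^{\pm 1}$ for $f\in\M(N_g)$; to place $t_{f(\gamma)}$ in $G$ you need the conjugator in $G$, which you acknowledge but never produce, and you likewise never write down Stukow's generating set or carry out a single chain or lantern computation --- the ``delicate bookkeeping'' you defer is the entire proof. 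If your aim is only to justify the statement as used in this paper, the economical route is the paper's: cite \cite{Genki} and note that conjugating Omori's generators by $(t_{\alpha_1}\cdots t_{\alpha_{g-1}})^{g-4}$, a product of twists already in $G$, carries his configuration to that of Figure \ref{fig:genkicurves}.
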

The curves on Figure \ref{fig:genkicurves} are not exactly the same as in \cite{Genki}, but by applying 
$(t_{\alpha_1}t_{\alpha_2}\cdots t_{\alpha_{g-1}})^{g-4}$ to $\beta$, we can ``shift'' it to the right-hand side of the figure and obtain the same configuration of curves as in \cite{Genki}, up to relabelling. 
Note that $(\alpha_i)_{i=1}^{g-1}$ is a $(g-1)$-chain, $\beta$ is disjoint from $\alpha_i$ for $i\ne 4$ and $i(\beta,\alpha_4)=1$ if $g\ge 5$.
We fix Dehn twists  $t_{\alpha_i}$, $i=1,\dots,g-1$ and $t_\beta$ in directions indicated by the arrows in Figure \ref{fig:genkicurves}.

\begin{figure}\begin{center}
\includegraphics[scale=.5]{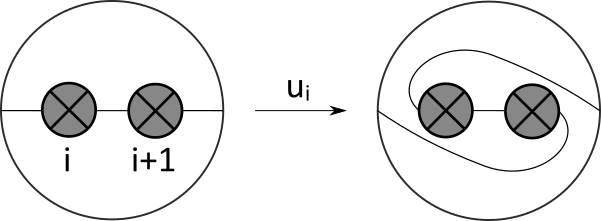}
\caption{The crosscap transposition $u_i$.}
\label{fig:transpo}
\end{center}
\end{figure}

For $i=1,\dots,g-1$ we define a {\it crosscap transposition} $u_i$ to be the isotopy class of a homeomorphism interchanging two consecutive crosscaps as shown on Figure \ref{fig:transpo} and equal to the identity outside a disc containing these crosscaps. It follows immediately from Theorem \ref{thm:twsb} that $u_i\notin\T(N_g)$.

\begin{figure}
\begin{center}
	\includegraphics[scale=.5]{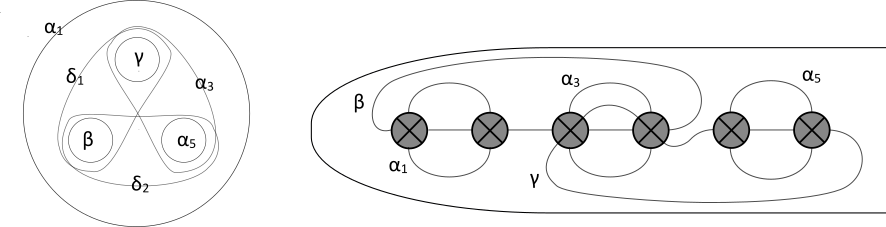}
	\caption{Curves of the lantern relation on $L$.}
	\label{lantern1}	\end{center}
\end{figure}

\medskip

Let $L\subset N_g$ be the four-holed sphere bounded by the curves $\beta$, $\alpha_1$, $\alpha_5$ and $\gamma$ shown on Figure \ref{lantern1} and oriented so that the twists $t_{\beta}$, $t_{\alpha_1}$ and $t_{\alpha_5}$ are right-handed. If $\delta_1$ and $\delta_2$ are curves on $L$ as on the left-hand side of Figure \ref{lantern1}, then the following \emph{lantern relation} between right-handed Dehn twists holds (see \cite{FM}):
	\[
t_{\alpha_1}t_{\beta}t_{\gamma}t_{\alpha_5}=t_{\alpha_3}t_{\delta_1}t_{\delta_2}.
	\]
Note that $\delta_1$ and $\delta_2$ are not unique. Indeed,  applying $t_{\alpha_3}$ changes $\delta_1$ and $\delta_2$ keeping the other curves of Figure \ref{lantern1} fixed.

\begin{lem}\label{lem1}\cite{Lanier}
	Suppose $f$, $g$ and $h$ are elements of $\mathcal{M}(N_g)$ such that
	\[
	\begin{split}
	f(\alpha_3)&=\alpha_5\\
	g(\delta_1,\gamma)&=(\alpha_3,\alpha_5)\\
	h(\delta_2,\beta)&=(\alpha_3,\alpha_5)
	\end{split}
	\]
	and $f$, $g$ and $h$ preserve the orientation of neighbourhoods of these curves induced from $L$. 
Then the Dehn twist $t_{\alpha_1}$ may be written as a product of $f$, $g$ and $h$, an element conjugate to $f$, and their inverses.
\end{lem}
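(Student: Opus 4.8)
The plan is to start from the lantern relation
\[
t_{\alpha_1}t_{\beta}t_{\gamma}t_{\alpha_5}=t_{\alpha_3}t_{\delta_1}t_{\delta_2}
\]
and solve for $t_{\alpha_1}$, obtaining
\[
t_{\alpha_1}=t_{\alpha_3}t_{\delta_1}t_{\delta_2}t_{\alpha_5}^{-1}t_{\gamma}^{-1}t_{\beta}^{-1}.
\]
The idea is then to recognise each twist on the right-hand side as a conjugate of one of the two twists $t_{\alpha_3}$ and $t_{\alpha_5}$ by one of the given elements $f$, $g$, $h$. First I would use the hypotheses to rewrite the individual twists. Because $g$ carries the pair $(\delta_1,\gamma)$ to $(\alpha_3,\alpha_5)$ while preserving the induced orientation of neighbourhoods, the conjugation formula $g^{-1}t_\mu g=t_{g^{-1}(\mu)}^{\pm1}$ gives, with the correct (positive) sign coming from orientation-preservation, $t_{\delta_1}=g^{-1}t_{\alpha_3}g$ and $t_{\gamma}=g^{-1}t_{\alpha_5}g$. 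Likewise $h$ gives $t_{\delta_2}=h^{-1}t_{\alpha_3}h$ and $t_{\beta}=h^{-1}t_{\alpha_5}h$, and $f$ gives $t_{\alpha_5}=f t_{\alpha_3}f^{-1}$, so that $t_{\alpha_5}^{-1}=ft_{\alpha_3}^{-1}f^{-1}$ is conjugate to $t_{\alpha_3}^{-1}$.

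Substituting these expressions into the solved lantern relation yields
\[
t_{\alpha_1}=t_{\alpha_3}\,(g^{-1}t_{\alpha_3}g)\,(h^{-1}t_{\alpha_3}h)\,(f t_{\alpha_3}^{-1}f^{-1})\,(g^{-1}t_{\alpha_5}^{-1}g)\,(h^{-1}t_{\alpha_5}^{-1}h).
\]
This already writes $t_{\alpha_1}$ as a product in which $f$, $g$, $h$, their inverses, and the two twists $t_{\alpha_3}$ and $t_{\alpha_5}$ appear. To match the precise form in the statement, I would then eliminate $t_{\alpha_5}$ in favour of $t_{\alpha_3}$ using $f$: since $t_{\alpha_5}=f t_{\alpha_3}f^{-1}$, the factors $g^{-1}t_{\alpha_5}^{-1}g$ and $h^{-1}t_{\alpha_5}^{-1}h$ become $g^{-1}f t_{\alpha_3}^{-1}f^{-1}g$ and $h^{-1}f t_{\alpha_3}^{-1}f^{-1}h$. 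At this stage $t_{\alpha_1}$ is expressed using only $t_{\alpha_3}$, the three elements $f,g,h$, and their inverses; and $t_{\alpha_3}$ itself is $f^{-1}t_{\alpha_5}f$ conjugated back, i.e.\ an element conjugate to the original data. Thus $t_{\alpha_1}$ is a product of $f$, $g$, $h$, an element conjugate to $f$ (namely the conjugate of $t_{\alpha_3}$), and their inverses, as claimed.

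The main obstacle I expect is bookkeeping the signs (the exponents $\epsilon\in\{-1,1\}$ in $f t_\alpha f^{-1}=t_{f(\alpha)}^{\epsilon}$) on a nonorientable surface, where Dehn twists have no globally preferred direction. The hypothesis that $f$, $g$, $h$ \emph{preserve the orientation of neighbourhoods of these curves induced from $L$} is exactly what forces every $\epsilon$ to be $+1$, so that the lantern relation, which holds among right-handed twists with respect to the orientation of $L$, transports consistently. I would therefore be careful to check that each of the twists $t_{\delta_1},t_{\delta_2},t_{\gamma},t_{\beta},t_{\alpha_5}$ appearing on the right is taken in the direction agreeing with the one fixed in $L$, so that no stray inverse is introduced when passing between $t_\mu$ and its conjugate description. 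The rest is the purely algebraic substitution above, which is routine once the orientation conventions are pinned down.
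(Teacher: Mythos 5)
There is a genuine gap at the final step. Your substitutions and sign bookkeeping are fine and run parallel to the paper's proof, but your last expression still contains the Dehn twist $t_{\alpha_3}$ (and its $g$-, $h$-, $f$-conjugates) as an irreducible ingredient, and your closing identification --- that ``the conjugate of $t_{\alpha_3}$'' is an element conjugate to $f$ --- is false: a conjugate of $t_{\alpha_3}$ is conjugate to $t_{\alpha_3}$, not to $f$. Indeed, in the application (Theorem \ref{thm2}) $f$ is a finite-order element, while a Dehn twist has infinite order, so no conjugate of $t_{\alpha_3}$ can be conjugate to $f$; the sentence ``$t_{\alpha_3}$ itself is $f^{-1}t_{\alpha_5}f$ conjugated back'' is circular and proves nothing. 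What you have actually shown is only that $t_{\alpha_1}\in\langle f,g,h,t_{\alpha_3}\rangle$, which is strictly weaker than the lemma and useless for the theorem, where the whole point is that the fourth generator $t_{\alpha_5}ft_{\alpha_5}^{-1}$ is a conjugate of $f$ and hence torsion of the same order $k$.

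The missing idea is a regrouping that eliminates the twists entirely. Since $t_\beta$, $t_\gamma$, $t_{\alpha_5}$ are twists about boundary curves of $L$ and commute with all twists supported in $L$, the lantern relation can be rearranged into differences of twists about disjoint pairs of curves,
\[
t_{\alpha_1}=\left(t_{\alpha_3}t_{\alpha_5}^{-1}\right)\left(t_{\delta_1}t_{\gamma}^{-1}\right)\left(t_{\delta_2}t_{\beta}^{-1}\right),
\]
so that by your (correct) conjugation identities the second and third factors are $g^{-1}\left(t_{\alpha_3}t_{\alpha_5}^{-1}\right)g$ and $h^{-1}\left(t_{\alpha_3}t_{\alpha_5}^{-1}\right)h$. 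The key observation, which your argument lacks, is that the hypothesis on $f$ turns the twist difference itself into a product of $f^{-1}$ and a genuine conjugate of $f$:
\[
t_{\alpha_3}t_{\alpha_5}^{-1}=\left(f^{-1}t_{\alpha_5}f\right)t_{\alpha_5}^{-1}=f^{-1}\left(t_{\alpha_5}ft_{\alpha_5}^{-1}\right).
\]
Substituting this into all three factors writes $t_{\alpha_1}$ as a word in $f$, $g$, $h$, $t_{\alpha_5}ft_{\alpha_5}^{-1}$ and their inverses, with no residual Dehn twist --- this is exactly the paper's proof. Your computation is repairable: commute $t_{\alpha_5}^{-1}$, $g^{-1}t_{\alpha_5}^{-1}g=t_{\gamma}^{-1}$ and $h^{-1}t_{\alpha_5}^{-1}h=t_{\beta}^{-1}$ past the disjoint twists to recover the paired form above and then apply the displayed identity; but without this step the conclusion of the lemma does not follow from what you wrote.
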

\begin{proof}
Since Dehn twists about nonintersecting curves commute, we can rearrange the lantern relation as
\begin{equation*}
t_{\alpha_1}=\left(t_{\alpha_3}t_{\alpha_5}^{-1}\right)\left(t_{\delta_1}t_{\gamma}^{-1}\right)\left(t_{\delta_2}t_{\beta}^{-1}\right)
\end{equation*}
Applying the assumptions on $g$ and $h$ gives us
\begin{equation*}
t_{\alpha_1}=\left(t_{\alpha_3}t_{\alpha_5}^{-1}\right)\left(g^{-1}\left(t_{\alpha_3}t_{\alpha_5}^{-1}\right)g\right)\left(h^{-1}\left(t_{\alpha_3}t_{\alpha_5}^{-1}\right)h\right);
\end{equation*}
applying the assumption on $f$ and regrouping gives us
\begin{equation*}
\begin{split}
t_{\alpha_1}&=\left(\left(f^{-1}t_{\alpha_5}f\right)t_{\alpha_5}^{-1}\right)
\left(g^{-1}\left(f^{-1}t_{\alpha_5}f\right)t_{\alpha_5}^{-1}g\right)
\left(h^{-1}\left(f^{-1}t_{\alpha_5}f\right)t_{\alpha_5}^{-1}h\right)\\
&=\left(f^{-1}\left(t_{\alpha_5}ft_{\alpha_5}^{-1}\right)\right)
\left(g^{-1}f^{-1}\left(t_{\alpha_5}ft_{\alpha_5}^{-1}\right)g\right)
\left(h^{-1}f^{-1}\left(t_{\alpha_5}ft_{\alpha_5}^{-1}\right)h\right).
\end{split}
\end{equation*}
We have written $t_{\alpha_1}$ as a product in $f$, $g$, $h$, $t_{\alpha_5}ft_{\alpha_5}^{-1}$ and their inverses.
\end{proof}

We close Preliminaries by proving two lemmas, which will be used to construct elements of $\M(N_g)$.

\begin{lem}\label{lem:lantchain}
Let $N=N_g$ be a closed nonorientable surface, $M_1,M_2\subset N$ two subsurfaces (not necessarily connected) and  $h:M_1\rightarrow M_2$ a homeomorphism. Assume that $N\setminus M_i$ is connected and nonorientable for $i=1,2$. Then there exists a homeomorphism $f:N\rightarrow N$ such that $f|_{M_1}=h$.
\end{lem}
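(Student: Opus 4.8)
The plan is to extend $h$ across the complements of $M_1$ and $M_2$. Write $K_i=\overline{N\setminus M_i}$ for the closure of the complement of $M_i$, so that $K_i$ is a compact surface with $\partial K_i=\partial M_i$, and by hypothesis each $K_i$ is connected and nonorientable. Since $N=M_i\cup K_i$ with $M_i\cap K_i=\partial M_i$ a disjoint union of circles, Euler characteristics add: $\chi(N)=\chi(M_i)+\chi(K_i)$. As $h\colon M_1\to M_2$ is a homeomorphism we have $\chi(M_1)=\chi(M_2)$, hence $\chi(K_1)=\chi(N)-\chi(M_1)=\chi(N)-\chi(M_2)=\chi(K_2)$. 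Moreover $h$ carries $\partial M_1$ homeomorphically onto $\partial M_2$, so $K_1$ and $K_2$ have the same number of boundary circles. By the classification of compact surfaces, two connected nonorientable surfaces with equal Euler characteristic and equally many boundary components are homeomorphic, so $K_1\cong K_2$.

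The heart of the argument is to produce a homeomorphism $\bar h\colon K_1\to K_2$ whose restriction to $\partial K_1$ equals $h|_{\partial M_1}$ (under the identifications $\partial K_i=\partial M_i$). First I fix any homeomorphism $\phi\colon K_1\to K_2$, which exists by the previous paragraph. Then $\psi:=(h|_{\partial M_1})\circ(\phi|_{\partial K_1})^{-1}$ is a self-homeomorphism of $\partial K_2$, and if $\psi$ extends to a homeomorphism $\Psi\colon K_2\to K_2$, then $\bar h:=\Psi\circ\phi$ does the job, since $\bar h|_{\partial K_1}=\psi\circ(\phi|_{\partial K_1})=h|_{\partial M_1}$. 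Thus everything reduces to the claim that every self-homeomorphism of $\partial K_2$ extends over $K_2$.

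To prove this claim I treat the two types of boundary behaviour separately. A self-homeomorphism of $\partial K_2$ is determined by a permutation of its boundary circles together with, on each circle, an orientation-preserving or orientation-reversing homeomorphism. Any permutation of the boundary circles is realized by a homeomorphism of $K_2$ because $K_2$ is connected: a transposition of two boundary components is carried out inside a pair-of-pants neighbourhood of the two circles and an arc joining them, being the identity on the third boundary circle and outside, and transpositions generate the symmetric group. On a single circle an orientation-preserving homeomorphism is isotopic to the identity and extends over a collar. The crucial point is the orientation-reversing case, and this is exactly where I use that $K_2$ is nonorientable: pushing a boundary circle once through a crosscap (a crosscap slide) yields a homeomorphism of $K_2$, supported near that circle and a one-sided curve, whose effect on the boundary is an orientation-reversing homeomorphism of that circle and the identity on the others; composing with a rotation realizes any prescribed orientation-reversing homeomorphism of the circle. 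Combining these moves extends $\psi$ to the required $\Psi$.

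Finally, I define $f\colon N\to N$ by $f|_{M_1}=h$ and $f|_{K_1}=\bar h$. These two homeomorphisms agree on the common boundary $M_1\cap K_1=\partial M_1$, where both equal $h|_{\partial M_1}$, so by the pasting lemma $f$ is a well-defined continuous bijection of the compact Hausdorff space $N$ onto itself, hence a homeomorphism, and $f|_{M_1}=h$ as required. The main obstacle is the claim of the third paragraph, and in particular its orientation-reversing case; this is precisely the step in which nonorientability of the complements is indispensable, since on an orientable surface a homeomorphism preserves or reverses the induced orientation of all boundary circles simultaneously, which would obstruct reflecting a single circle.
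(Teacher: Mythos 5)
Your proof is correct and follows essentially the same route as the paper's: pass to the complements $K_i$, use Euler characteristic and the classification of surfaces to get $K_1\cong K_2$, and use the boundary slide (pushing a boundary circle along a one-sided loop, exactly the paper's $\rho_\gamma$, cf.\ \cite[Lemma 5.1]{Szep_Osaka} or \cite{MK3}) to fix up orientation-reversing behaviour on individual boundary circles before pasting. You in fact spell out two steps the paper leaves implicit---realizing the boundary permutation via pair-of-pants transpositions and adjusting each circle pointwise in a collar---which is a sound elaboration rather than a deviation.
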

\begin{proof}
Let $K_i=(N\setminus M_i)\cup \partial M_i$ for $i=1,2$. Then $K_i$ is a compact, connected and  nonorientable surface with boundary, $\partial K_i=\partial M_i$ and its Euler characteristic $\chi(K_i)=\chi(N)-\chi(M_i)$, so $\chi(K_1)=\chi(K_2)$. By the classification of compact surfaces two nonorientable surfaces with the same number of boundary components and equal Euler characteristic are homeomorphic, so $K_1$ and $K_2$ are homeomorphic. Let $\hat{h}:K_1\rightarrow K_2$ be a homeomorphism such that for any connected component $\gamma$ of $\partial K_1$ we have $\hat{h}(\gamma)=h(\gamma)$.
For each such component $\gamma\subset \partial K_1$ there exists a homeomorphism $\rho_\gamma\colon K_1\to K_1$ reversing orientation of $\gamma$ and equal to the identity on the other components of $\partial K_1$. Such $\rho_\gamma$ may obtained by pushing $\gamma$ once along a one-sided loop (see \cite[Lemma 5.1]{Szep_Osaka} or \cite{MK3}, where such a homeomorphism is called boundary slide). By composing $\hat{h}$ with $\rho_\gamma$ if necessary, and  may assume that $\hat{h}$ is equal to $h$ on $\partial K_1=\partial M_1$.
We can now define $f$ to be equal to $h$ on $M_1$ and $\hat{h}$ on $K_1$.
\end{proof}

\begin{lem}\label{lem:chains}
Suppose that $C_1=(\alpha_i)_{i=1}^n$ and $C_2=(\beta_i)_{i=1}^n$ are nonseparating $n$-chains on $N_g$, where either $g=n+1$ or $g=n+2$. There exists a homeomorphism $f\colon N_g\to N_g$ such that
$f(\alpha_i)=\beta_i$ for $i=1,\dots,n$.
\end{lem}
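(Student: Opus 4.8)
The plan is to apply the change-of-coordinates principle: I would first build a homeomorphism between regular neighbourhoods of the two chains carrying one chain to the other, and then extend it across the complementary subsurface, invoking Lemma \ref{lem:lantchain} to handle the extension whenever the complement is nonorientable.

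First I would fix regular neighbourhoods $R_1$ and $R_2$ of $C_1$ and $C_2$. Since each $\alpha_i$ and $\beta_i$ is two-sided and the intersection pattern of a chain is a path (so there are no cycles), the neighbourhood of an $n$-chain of two-sided curves is an \emph{orientable} surface; an explicit model is obtained by thickening $\alpha_1$ to an annulus and attaching one band for each successive curve. This standard model shows that $R_i$ is orientable of genus $\lfloor n/2\rfloor$ with $\varepsilon$ boundary components, where $\varepsilon=1$ for $n$ even and $\varepsilon=2$ for $n$ odd, and, crucially, that there is a homeomorphism $h\colon R_1\to R_2$ with $h(\alpha_i)=\beta_i$ for all $i$.

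Next I would analyse the complement $K_i=N_g\setminus\mathrm{int}(R_i)$. Because each chain is nonseparating, $K_i$ is connected, and from $\chi(N_g)=\chi(R_i)+\chi(K_i)$ together with $\chi(R_i)=1-n$ one computes $\chi(K_i)=0$ when $g=n+1$ and $\chi(K_i)=-1$ when $g=n+2$, while $K_i$ has $\varepsilon$ boundary components. By the classification of surfaces these data pin down $K_i$ up to the single remaining ambiguity of orientability, so $K_1\cong K_2$ once orientability is settled. I would settle it as follows. When $n$ is even ($\varepsilon=1$), $R_i$ and $K_i$ are glued along a single circle, and gluing two orientable surfaces along one circle yields an orientable surface; since $N_g$ is nonorientable while $R_i$ is orientable, $K_i$ must be nonorientable (a M\"obius band if $g=n+1$, a once-holed Klein bottle if $g=n+2$). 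When $n$ is odd and $g=n+2$, the constraint $\chi(K_i)=-1$ with two boundary circles has no orientable solution, so again $K_i$ is nonorientable. In all these cases $N_g\setminus R_i$ is connected and nonorientable, so Lemma \ref{lem:lantchain} applied to $M_1=R_1$, $M_2=R_2$ and $h$ extends $h$ to the desired homeomorphism $f\colon N_g\to N_g$.

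The remaining case, $n$ odd and $g=n+1$, is where I expect the main difficulty, since there $\chi(K_i)=0$ with two boundary circles forces $K_i$ to be an \emph{annulus}, which is orientable, so Lemma \ref{lem:lantchain} does not apply. Here I would argue directly: the annulus $K_i$ exhibits $N_g$ as the surface obtained from the orientable surface $R_i$ by gluing its two boundary circles, and because $N_g$ is nonorientable this self-gluing must be of the orientation-reversing type. Such gluings of $R_i$ are unique up to homeomorphism, so the chain-preserving homeomorphism $h\colon R_1\to R_2$ can be chosen to intertwine the two gluing maps and hence descends to a homeomorphism $f\colon N_g\to N_g$ with $f(\alpha_i)=\beta_i$. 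The delicate points throughout are identifying the orientability of the complement correctly and matching the boundary behaviour of $h$ with the complementary piece; this is exactly the bookkeeping that Lemma \ref{lem:lantchain} packages via boundary slides in the nonorientable cases and that must be carried out by hand in this last, orientable, case.
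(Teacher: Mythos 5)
Your proposal is correct and follows essentially the same route as the paper's own proof: regular neighbourhoods of the chains, the same Euler characteristic computation and case division by the parity of $n$ and the value of $g$, extension via Lemma \ref{lem:lantchain} when the complement is nonorientable, and a direct gluing argument in the one case ($n$ odd, $g=n+1$) where the complement is an annulus. In fact you supply slightly more detail than the paper does in that annulus case and in justifying nonorientability of the complements, but the structure of the argument is the same.
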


\begin{proof} For $i=1,2$ let $M_i$ be a regular neighbourhood of $C_i$ and $K_i=(N_g\setminus M_i)\cup\partial{M_i}$. Since $C_i$ is nonseparating, $K_i$ is connected.  Let $h\colon M_1\to M_2$ be a homeomorphism such that $h(\alpha_i)=\beta_i$ for $i=1,\dots,n$.

If $n$ is even, then $M_i$ is an orientable subsurface of genus $n/2$ with one boundary component for $i=1,2$ and thus $K_i$ must be nonorientable.  By Lemma \ref{lem:lantchain} there exists $f\colon N_g\to N_g$ such that $f|_{M_1}=h$.

Suppose now that $n$ is odd. Then $M_i$ is an orientable subsurface of genus $(n-1)/2$ with two boundary components for $i=1,2$ and the Euler characteristic of $K_i$ is
\[\chi(K_i)=\chi(N_g)-\chi(M_i)=1+n-g.\]
If $g=n+2$, then $\chi(K_i)=-1$ is and hence $K_i$ is nonorientable for $i=1,2$. Again, by Lemma \ref{lem:lantchain} there exists $f\colon N_g\to N_g$ such that $f|_{M_1}=h$.
If $g=n+1$ then for $i=1,2$, $\chi(K_i)=0$ and hence $K_i$ is an annulus which is glued to $M_i$ so that the resulting surface is nonorientable. It follows that $h$ can be extended to a homeomorphism $f\colon N_g\to N_g$.
 \end{proof}

\section{Three torsion generators of $\mathcal{M}(N_g)$}

In this section we prove Theorem \ref{thm1}.
	\begin{figure}\begin{center}
			(a)\includegraphics[scale=.3]{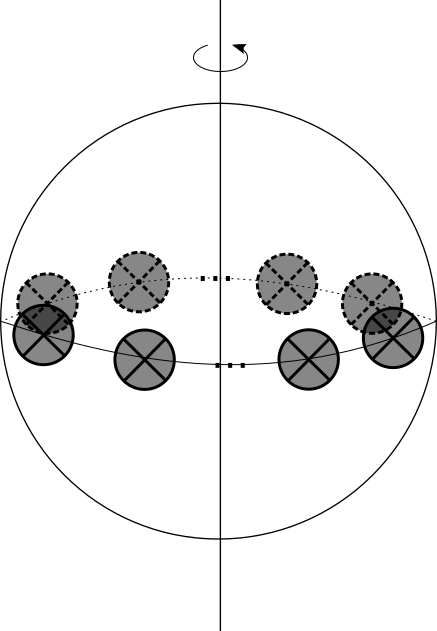}$\:\:\:\:\:\:$(b)
			\includegraphics[scale=.3]{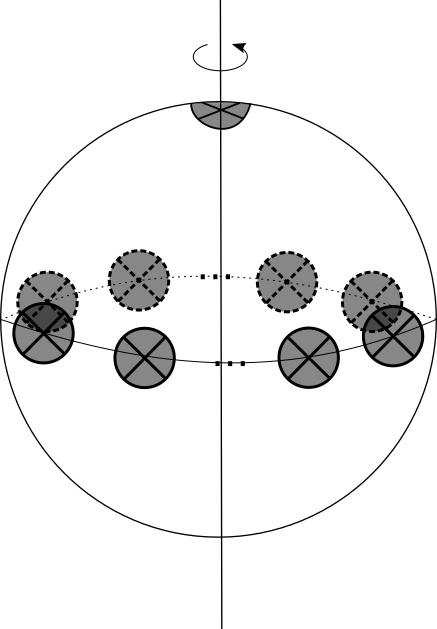}
			\caption{Rotation of order $k$  of (a) $N_k$ and (b) $N_{k+1}$}.
			\label{fig:nk}
		\end{center}
	\end{figure}
\begin{proof}[Proof of Theorem \ref{thm1}]
Let us define four torsion elements of $\M(N_g)$:
\begin{align*}
s&=t_{\alpha_1}t_{\alpha_2}\cdots t_{\alpha_{g-1}}, &r&=u_1u_2\cdots u_{g-1}\\
s'&=t_{\alpha_1}^2t_{\alpha_2}\cdots t_{\alpha_{g-1}}, &r'&=u_2\cdots u_{g-1}.
\end{align*}
The elements $r$ and $r'$ are conjugates of the rotations shown on Figure \ref{fig:nk} (respectively (a) and (b)) and they have orders $g$ and $g-1$ respectively. 
If $g$ is even, then $r\notin\T(N_q)$, whereas if $g$ is odd, then $r'\notin\T(N_g)$.

By the braid relations between Dehn twists we have $(s')^{g-1}=s^g$. If $g$ is even then $s$ and $s'$ have orders $g$ and $g-1$ respectively, whereas if $g$ is odd, they have orders $2g$ and $2(g-1)$ respectively (see \cite{PSz}). It is easy to check that $s(\alpha_i)=\alpha_{i+1}$ and $r(\alpha_i)=\alpha_{i+1}$ for $i=1,\dots,g-1$.
Let $y=t_{\alpha_{g-1}}u_{g-1}$. This is Lickorish's Y-homeomorphism, also called a crosscap slide (see \cite{PSz}).
 
The group $\M(N_1)$ is trivial \cite{Eps}, $\M(N_2)$ and $\T(N_2)$ are isomorphic respectively to $\mathbb{Z}_2\oplus\mathbb{Z}_2$ and $\mathbb{Z}_2$ \cite{Lick3}.
By the presentation of $\M(N_3)$ given in \cite{BC}, $\M(N_3)$ is generated by $t_{\alpha_1}$, $t_{\alpha_2}$ and $y$, and $\T(N_3)$ is generated by $t_{\alpha_1}$, $t_{\alpha_2}$. It follows that $\M(N_3)$ is generated by  $t_{\alpha_1}t_{\alpha_2}$, $t_{\alpha_1}^2t_{\alpha_2}$ and $u_2$ of orders $6$, $4$ and $2$ respectively, and $\T(N_3)$ is generated by $t_{\alpha_1}t_{\alpha_2}$ and $t_{\alpha_1}^2t_{\alpha_2}$ .

We will show that $\M(N_5)$ is generated by $s$, $st_{\beta}$ and $y^{-1}r'y$	.
Note that $st_{\beta}$ has order $6$. Let $G=\langle s, st_{\beta}, y^{-1}r'y\rangle$.
We have $t_{\beta}\in G$,  $t_{\alpha_4}=(st_{\beta})^{-1}t_{\beta}(st_{\beta})\in G$, and since all the curves $\alpha_i$ are in one $\langle s\rangle$-orbit, $t_{\alpha_i}\in G$ for $i=1,\dots,4$. We also have
	\[y^{-1}r'y(\alpha_2)=y^{-1}r'(\alpha_2)=y^{-1}(\alpha_3)=\varepsilon,\]
and hence $\T(N_5)\le G$ by Theorem \ref{thm:Genki}. Since $xr'x^{-1}\notin\T(N_5)$  we have $G=\M(N_5)$.

Now let $g\ge 6$. We define 
\[x=\begin{cases}
y^{-1}t_{\alpha_2}t_{\alpha_3}t_{\alpha_4}t_{\beta}&\text{if $g=6$}\\
t_{\alpha_{g-1}}u_{g-2}t_{\alpha_2}t_{\alpha_3}t_{\alpha_4}t_{\beta}&\text{if $g\ge 7$}.
\end{cases}
\]  
It can be checked that:
\[x(\alpha_4)=\beta,\ x(\alpha_2)=\alpha_3,\ x(\alpha_3)=\varepsilon\text{ if }g=6,\ x(\alpha_{g-1})=\varepsilon\text{ if }g\ge 7.\]	

Suppose that $g$ is even, $g\geq 6$ and let $G$ be the subgroup of $\M(N_g)$ generated by $s$, $s'$ and $xrx^{-1}$. We will show that $G=\M(N_g)$.  We have $t_{\alpha_1}=(s')s^{-1}\in G$, and since all the curves $\alpha_i$ are in one $\langle s\rangle$-orbit, $t_{\alpha_i}\in G$ for $i=1,\dots,g-1$.
	We have
	\begin{align*}
	&xr^2x^{-1}(\alpha_3)=xr^2(\alpha_2)=x(\alpha_4)=\beta,\\
	&xrx^{-1}(\alpha_3)=xr(\alpha_2)=x(\alpha_3)=\varepsilon\quad\textrm{if\ }g=6,\\
	&xr^{g-3}x^{-1}(\alpha_{3})=xr^{g-3}(\alpha_2)=x(\alpha_{g-1})=\varepsilon\quad\textrm{if\ }g\ge 8.
	\end{align*}
	It follows that $\beta$, $\varepsilon$ and $\alpha_3$ are in the same $G$-orbit, and hence $t_{\beta}, t_{\epsilon}\in G$. By Theorem \ref{thm:Genki}, $G$ contains generators of $\T(N_g)$, thus $\T(N_g)\le G$, and since $xrx^{-1}\notin\T(N_g)$  we have $G=\M(N_g)$.

If $g$ is odd, $g\geq 7$, then by the same argument as above $\M(N_g)$ is generated by $s$, $s'$ and $xr'x^{-1}$ (note that $r'\notin\T(N_g)$).

By substituting $s$ for $r$ and $r'$ in the above arguments we obtain that $\T(N_g)$ is generated by $s$, $s'$ and $xsx^{-1}$ for $g\ge 6$, and $\T(N_5)$ is generated by $s$, $st_{\beta}$ and $y^{-1}sy$.
\end{proof}

\section{Three conjugate torsion generators of $\mathcal{M}(N_g)$}
In this section we prove Theorem \ref{thm2}. We begin by constructing an element of given order $k$ for sufficiently large $g$.

	\begin{figure}\begin{center}
	(a)\includegraphics[scale=.35]{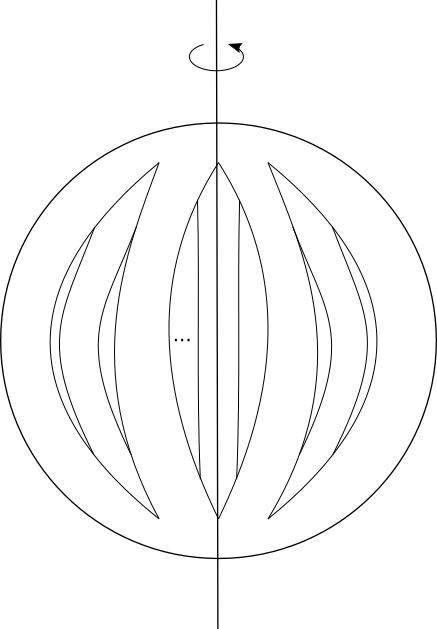}
		(b)\includegraphics[scale=.35]{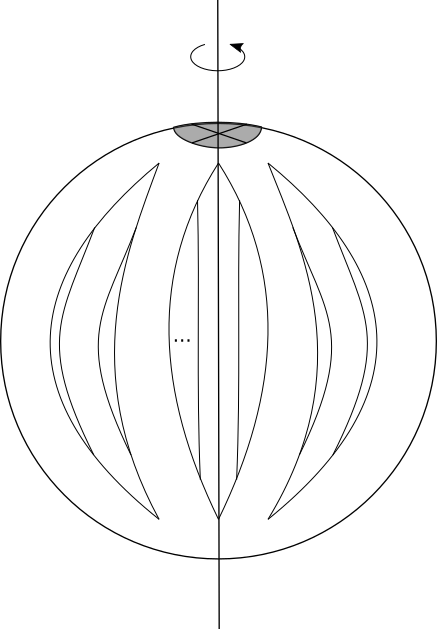}
	\caption{Rotation of order $k$ of (a) an orientable surface of genus $k-1$ and (b) a nonorientable surface of genus $2k-1$.}
	\label{fig:embed}	
\end{center}
\end{figure}
\begin{lem}\label{embed}
Let $k \geq 2$ and $g=pk+2q(k-1)$ or $g=pk+2q(k-1)+1$, where $p,q\in\mathbb{Z}_{\geq 0}$ and $p\ge 1$. Then $\mathcal{M}(N_g)$ contains an element of order $k$, which is in the twist subgroup if and only if either  $k$ is odd or $p$ is even.
\end{lem}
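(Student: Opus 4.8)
The plan is to construct an element $\rho\in\M(N_g)$ of order $k$ as an explicit rotation, assembled by equivariant gluing from the building blocks of Figures \ref{fig:nk} and \ref{fig:embed}, and then to decide membership in $\T(N_g)$ via the determinant criterion of Theorem \ref{thm:twsb}.

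First the construction. I would start from a $2$-sphere carrying the rotation $\rho_0$ of order $k$ about its axis and present $N_g$ as this sphere with the following $\rho_0$-invariant modifications inserted along $\rho_0$-invariant annuli and disks: (i) $p$ \emph{necklaces}, each a $\rho_0$-orbit of $k$ crosscaps placed symmetrically around a latitude and cyclically permuted, contributing $pk$ to the genus (the model of Figure \ref{fig:nk}(a)); (ii) $q$ copies of the orientable genus $k-1$ piece of Figure \ref{fig:embed}(a), each carrying its own order-$k$ rotation and glued in equivariantly, contributing $2q(k-1)$; and, in the second case, (iii) one further crosscap placed at a pole and rotated by $\rho_0$ (Figure \ref{fig:nk}(b)). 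Since every piece is $\rho_0$-invariant and its internal rotation matches $\rho_0$ on the gluing annuli, the pieces assemble to a homeomorphism $\rho$ with $\rho^k=\mathrm{id}$; Figure \ref{fig:embed}(b) illustrates the resulting rotation on a $q$-type piece together with a fixed crosscap. Because $p\ge 1$, at least one necklace is cyclically permuted and hence moved by every $\rho^j$ with $0<j<k$, so $\rho$ has order exactly $k$. Here Lemma \ref{lem:lantchain} (or a direct inspection of the figures) guarantees that the modifications extend to a homeomorphism of the whole surface, and one checks by Euler characteristic and orientability bookkeeping that the result is precisely $N_g$ with $g=pk+2q(k-1)$ (respectively $g=pk+2q(k-1)+1$).

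Next I would decide whether $\rho\in\T(N_g)$ by computing $\det(\rho_*)$ on $H_1(N_g;\mathbb{R})\cong\mathbb{R}^{g-1}$. Writing $N_g$ as a connected sum of an orientable surface of genus $h=q(k-1)$ with a nonorientable surface of genus $m=pk$ (or $m=pk+1$), one gets a $\rho_*$-invariant splitting $H_1(N_g;\mathbb{R})=W\oplus V$, where $W\cong\mathbb{R}^{2h}$ is spanned by the handle classes of the $q$ orientable pieces and $V\cong\mathbb{R}^{m-1}$ is spanned by the crosscap cores $a_1,\dots,a_m$ subject to the single relation $\sum_i a_i=0$. On $W$ the pairing is a nondegenerate symplectic form preserved by $\rho_*$, since $\rho$ restricts on each orientable piece to an orientation-preserving finite-order map; hence $\det(\rho_*|_W)=1$. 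Treating $a_1,\dots,a_m$ as a basis of $\mathbb{R}^m$, the map $\rho_*$ is a signed permutation consisting of $p$ $k$-cycles (one per necklace) together with a fixed basis vector in the second case; the sign-product around each $k$-cycle, and the sign on the fixed crosscap, are forced to be $+1$ by $\rho^k=\mathrm{id}$, so the determinant on $\mathbb{R}^m$ equals $\big((-1)^{k-1}\big)^p=(-1)^{p(k-1)}$. The kernel line $\langle\sum_i a_i\rangle$ is $\rho_*$-fixed, so passing to the quotient $V$ does not change the determinant. Thus $\det(\rho_*)=(-1)^{p(k-1)}$.

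Combining, $\rho\in\T(N_g)$ if and only if $\det(\rho_*)=1$, i.e. if and only if $p(k-1)$ is even, that is $k$ is odd or $p$ is even, which is exactly the assertion. As consistency checks, the cases $(p,q)=(1,0)$ recover the rotations $r$ (order $g$, $\det=(-1)^{g-1}$) and $r'$ (order $g-1$, $\det=(-1)^{g}$) from the proof of Theorem \ref{thm1}, matching the parities recorded there. I expect the main obstacle to be not the homology computation, which is clean once the equivariant splitting is in place, but the careful equivariant construction from the figures, in particular verifying that the orientable genus-$(k-1)$ piece admits an order-$k$ rotation agreeing with $\rho_0$ on its boundary annuli and that the gluing yields exactly $N_g$.
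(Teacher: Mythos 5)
Your overall route is the paper's: the same equivariant model (a connected sum, along the axis of rotation, of $p$ rotating ``necklaces'' of $k$ crosscaps as in Figure \ref{fig:nk}(a) and $q$ copies of Lanier's orientable genus-$(k-1)$ rotation as in Figure \ref{fig:embed}(a), plus one invariant crosscap at the pole in the $g=pk+2q(k-1)+1$ case), followed by the same membership test via $\det(f_*)$ and Theorem \ref{thm:twsb}. Your homology bookkeeping is in fact cleaner than the paper's: the paper dispatches odd $k$ by the index-two/odd-order shortcut and, for even $k$, writes out the matrices of $r_*$ on explicit generators ($a_i,b_i$ on each orientable summand, $c_i$ on each necklace, dropping one $c_i$) and asserts $\det(r_*)=(-1)^p$; your splitting into a symplectic block of determinant $1$ and a signed-permutation block of determinant $(-1)^{p(k-1)}$ recovers this uniformly in $k$, and your symplectic argument for the orientable block is correct since $\rho$ preserves each orientable summand and acts on it orientation-preservingly.

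There is, however, one genuine (if small) gap: the claim that the sign on the fixed crosscap is ``forced to be $+1$ by $\rho^k=\mathrm{id}$.'' Around a $k$-cycle this reasoning is valid, since $\rho_*^k(c_1)=\bigl(\prod_i\varepsilon_i\bigr)c_1$ pins down the product of the signs; but for the fixed crosscap core $c$ one only gets $\rho_*^k(c)=s^kc$ with $s=\pm1$, and when $k$ is even $s^k=1$ holds regardless of $s$. So in exactly the case where this sign matters --- $k$ even and $g=pk+2q(k-1)+1$, where $s=-1$ would flip $\det(\rho_*)$ and hence reverse the conclusion of the lemma --- your stated justification is vacuous. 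And the worry is real: a finite-order homeomorphism of even order can reverse the core of an invariant M\"obius band (a reflection of the band is an involution inducing $-1$ on its first homology), so some input from the actual construction is required. The fix is one line: your rotation restricted to the invariant M\"obius band about the pole is a rotation of that band, carrying the core to itself preserving orientation, so $s=+1$. Note also that your final quotient step (``the kernel line $\langle\sum_i a_i\rangle$ is $\rho_*$-fixed'') silently uses that all individual signs, not just their cyclic products, are $+1$; this too is fine once you orient each necklace by pushforward (making the necklace block an honest permutation matrix, the closing sign being $+1$ by the cycle argument) and settle $s=+1$ as above. With these repairs your proof is complete and agrees with the paper's.
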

\begin{proof}
In \cite{Lanier}, Lanier shows a way to embed an orientable surface of genus $k-1$ into $\mathbb{R}^3$ so that it has rotational symmetry of order $k$: arrange two spheres along an axis of rotation and remove $k$ disks from each sphere, evenly spaced along the equator of each, and connect pairs of boundary components, one from each sphere, with a cylinder (Figure \ref{fig:embed}(a)). This can be done symmetrically so that a rotation by $2\pi/k$ permutes the cylinders cyclically.
	It is also easy to construct a model of a nonorientable surface of genus $k$ so that it has rotational symmetry of order $k$: arrange the $k$ crosscaps evenly along the equator of a sphere in a way that a rotation by $2\pi/k$ permutes the crosscaps cyclically (Figure \ref{fig:nk}(a)).

We can now use these two models to construct a model $E_g$ of a nonorientable surface of genus $g$ that also has a rotational symmetry of order $k$.
For $g=pk+2q(k-1)$  we define $E_g$ to be the connected sum of $p$ copies of a nonorientable surface of genus $k$ and $q$ copies of an orientable surface of genus $k-1$ along their axis of rotation (Figure \ref{fig:exembed} (a) or (b)). For $g=pk+2q(k-1)+1$ we take the connected sum as above and then add one crosscap, so that it is preserved by the rotation (Figure \ref{fig:exembed} (c) or (d)). 
We denote by $r$ the rotational symmetry of $E_g$.
\begin{figure}\begin{center}
\includegraphics[scale=.22]{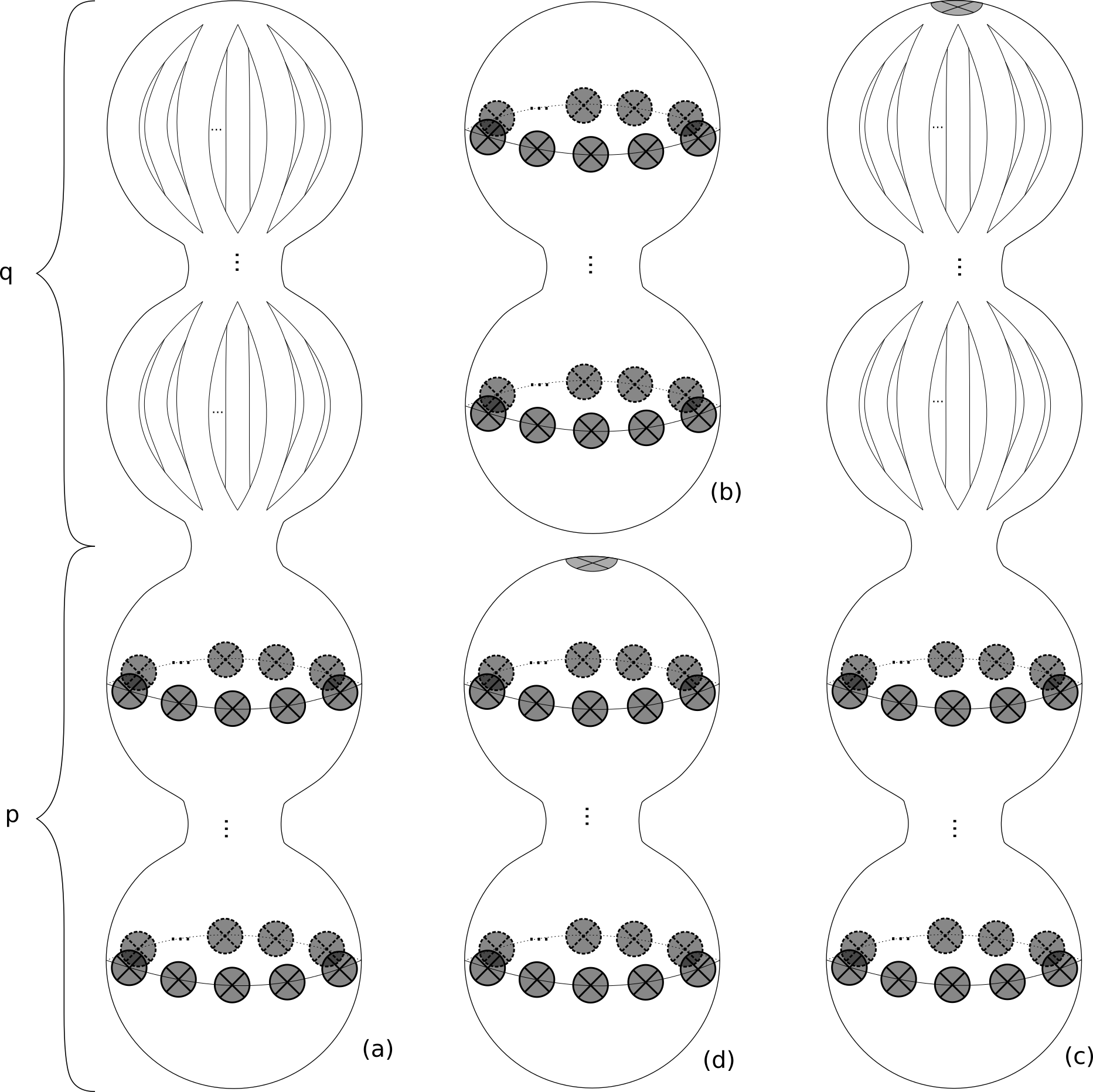}
	\caption{Models of $N_g$ with rotational symmetry of order $k$.}
	\label{fig:exembed}\end{center}	
\end{figure}

Since $\T(E_g)$ is a subgroup of index two of $\M(E_g)$, it contains all elements of odd order. Thus $r\in\T(E_g)$ if $k$ is odd. 

For the rest of this proof assume that $k$ is even. In order to apply Theorem \ref{thm:twsb}, we need to compute  $\det r_\ast$, where $r_\ast$ is the induced automorphism of $H_1(E_g,\mathbb{R})$. We assume that $g$ is even. For odd $g$ the computation is similar.

 We begin by setting a generating set of $H_1(E_g;\mathbb{R})$ composed of standard generators of the homology groups of the $p+q$ summands of the connected sum represented by curves shown in Figure \ref{fig:homembed}. Here we denote a curve and its homology class by the same symbol.
 
\begin{figure}\centering
\includegraphics[scale=.4]{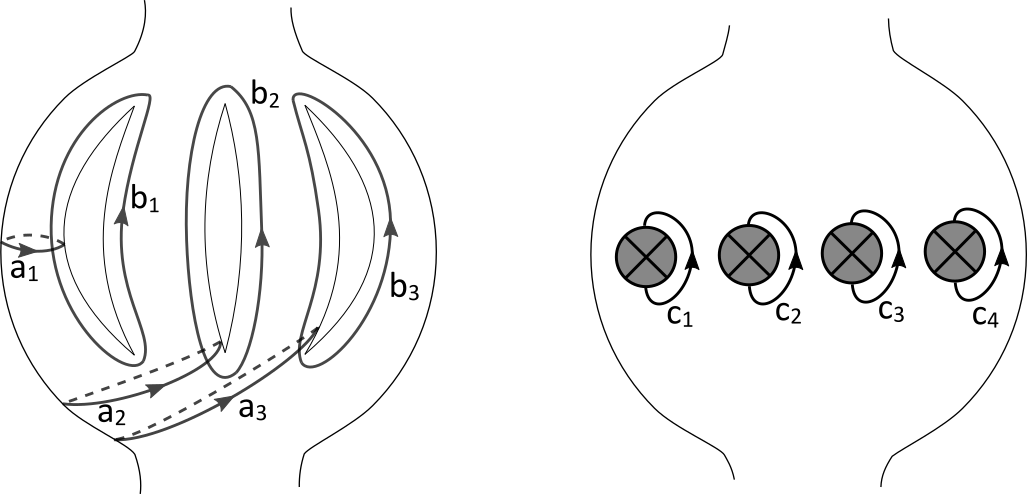}
	\caption{Generators of $H_1(E_g,\mathbb{R})$.}
	\label{fig:homembed}
\end{figure}

For the sake of simplicity, let us number these curves on each of the $r$-invariant subsurfaces separately: $a_i$ and $b_i$ for $i=1,\dots,k-1$ on each of the $q$ orientable subsurfaces of genus $k-1$ and one-sided curves $c_i$ for $i=1,\dots,k$ on each of the $p$ nonorientable subsurfaces of genus $k$.
We have
\begin{align*}
&r_\ast(a_i)=a_{i+1}-a_1,\ r_\ast(b_i)=b_{i+1}\quad\textrm{for\ }i=1,\dots,k-2,\\
&r_\ast(a_{k-1})=-a_1,\ r_\ast(b_{k-1})=-(b_1+\dots+b_{k-1}),\\
&r_\ast(c_i)=c_{i+1}\quad \textrm{for\ }i=1,\dots,k-1,\quad r_\ast(c_k)=c_1.
\end{align*} 

Note that this generating set consisting of $pk+2q(k-1)=g$ elements is not a basis of  $H_1(E_g,\mathbb{R})$, as the sum of all the $c_i$ generators is $0$. In order to obtain a basis we need to drop one of the $c_i$ generators. A straightforward computation gives  $\mathrm{det}(r_*)=(-1)^{p}$, which means, by Theorem \ref{thm:twsb}, that $r\in \mathcal{T}(E_g)$ if and only if $p$ is even.
\end{proof}

Now we prove Theorem \ref{thm2}. 
\begin{proof}[Proof of Theorem \ref{thm2}.]
Let $g=pk+2q(k-1)$ or $g=pk+2q(k-1)+1$.
We are going to construct three elements $f,g,h\in\M(N_g)$ such that:
\begin{itemize}
\item[(1)] $f$ and $g$ are conjugate to the rotation $r$ defined in Lemma \ref{embed} and  $h$ is a product of a power of $f$ and a power of $g$;
\item[(2)]  $f,g,h$ or their powers satisfy the assumptions of Lemma \ref{lem1};
\item[(3)]  all curves from Theorem \ref{thm:Genki} are in one $\langle f,g\rangle$-orbit.
\end{itemize}
Suppose that $f,g,h$ are such elements.
Then $f,g$ have order $k$. Let $H=\langle f,g,t_{\alpha_5}ft_{\alpha_5}^{-1}\rangle$. 
 By (2) and by the proof of Lemma \ref{lem1}, $t_{\alpha_1}\in H$, and by (3) $H$ contains generators of $\T(N_g)$. Since $\T(N_g)$ has index two in $\M(N_g)$, either $H=\T(N_g)$ or $H=\M(N_g)$. Furthermore, $H=\M(N_g)\iff f\notin\T(N_g)$, and by Lemma \ref{embed} $f\notin\T(N_g)$ if and only if $k$ is even and $p$ is odd.

\medskip
\noindent\textbf{Case1: $p=1$ and $q=0$.} 

Then $k=g$ or $k=g-1$ and $r$ is simply one of the rotations shown on Figure \ref{fig:nk} and, up to conjugation,  it can be written as \[r=u_1u_2\dots u_{k-1}.\] 
Let $f=r$.  
We have $f(\alpha_i)=\alpha_{i+1}$ for $i=1,\dots,k-2$, where $\alpha_i$ are the curves from Theorem \ref{thm:Genki} (Figure \ref{fig:genkicurves}). Thus $f^2$ maps $\alpha_3$ to $\alpha_5$. 

\begin{figure}\begin{center}
			\includegraphics[scale=.4]{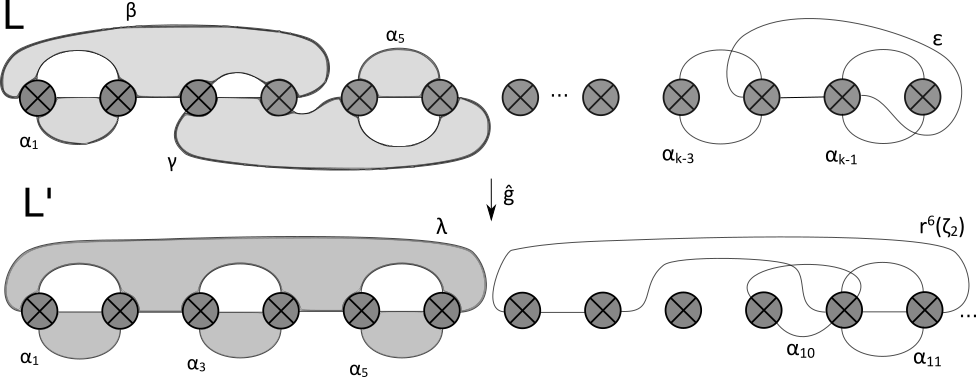}
			\caption{Map $\hat{g}$ on $N_k$.}
			\label{fig:nkg}
		\end{center}
	\end{figure}

Let $L\subset N_g$ be the four-holed sphere bounded by the curves $\alpha_1$, $\beta$, $\alpha_5$ and $\gamma$ shown on Figure \ref{lantern1}, and let $L'\subset N_g$ be the four-holed sphere bounded by the curves $\lambda$, $\alpha_1$, $\alpha_3$ and $\alpha_5$ shown on Figures \ref{fig:nkg} and \ref{fig:nkgdet} oriented so that the twists $t_{\alpha_i}$, $i=1,3,5$ are right-handed. We have the following lantern relation between right-handed twists on $L'$:
\[t_{\zeta_1}t_{\zeta_2}t_\beta=t_{\alpha_1}t_{\alpha_3}t_{\alpha_5}t_{\lambda}.\]
Let $\hat{g}_L\colon L\to L'$ be an orientation-preserving homeomorphism mapping
\[(\beta,\alpha_5,\gamma,\alpha_1,\alpha_3)\mapsto (\alpha_1,\alpha_3,\alpha_5,\lambda,\zeta_1).\]
The curves $\delta_1$ and $\delta_2$ on $L$ defined as $\delta_1=\hat{g}_L^{-1}(\zeta_2)$ and 
 $\delta_2=\hat{g}_L^{-1}(\beta)$, together with $\alpha_3$, $\gamma$, $\beta$, $\alpha_1$ and $\alpha_5$ form the configuration shown on the left-hand side of Figure \ref{lantern1}.
 	\begin{figure}\begin{center}
			\includegraphics[scale=.4]{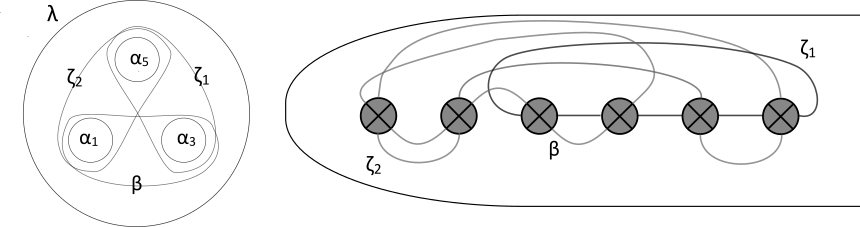}
			\caption{The curves of the lantern relation on $L'$.}
			\label{fig:nkgdet}
		\end{center}
	\end{figure}

Suppose that $g=k$ and let $M$ and $M'$ be regular neighbourhoods of the $3$-chains $(\alpha_{k-3},\varepsilon,\alpha_{k-1})$ and $(r^6(\zeta_2),\alpha_{10},\alpha_{11})$, respectively (see Figure \ref{fig:nkg}), oriented so that the twists $t_{\alpha_i}$ are right-handed. Let $\hat{g}_M\colon M\to M'$ be an orientation-preserving homeomorphism mapping
\[(\alpha_{k-3},\epsilon,\alpha_{k-1})\mapsto (r^6(\zeta_2),\alpha_{10},\alpha_{11}).\]
Since $N_k\setminus(L\cup M)$ and $N_k\setminus(L'\cup M')$ are connected and nonorientable,  by Lemma \ref{lem:lantchain}
there exists a homeomorphism $\hat{g}:N_k\rightarrow N_k$  equal to $\hat{g}_L$ on $L$ and $\hat{g}_M$ on $M$.

We define $g$ to be the mapping class of $\hat{g}^{-1}r\hat{g}$. Observe that $g^{2}$ maps $(\delta_2,\beta)$ to $(\alpha_3,\alpha_5)$, $g^6$ maps $(\delta_1,\gamma)$ to $(\alpha_{k-3},\alpha_{k-1})$ and $f^{6-k}$ maps $(\alpha_{k-3},\alpha_{k-1})$ to $(\alpha_3,\alpha_5)$. We define $h=f^{6-k}g^6$; then $h$ maps $(\delta_1,\gamma)$ to $(\alpha_3,\alpha_5)$.

Now $f^2$, $g^2$ and $h$ satisfy the assumptions of Lemma \ref{lem1}. Clearly all the curves $\alpha_i$ for $i=1,\dots,{k-1}$ are in one $\langle f\rangle$-orbit, and by the definition of $g$, $g^2(\beta)=\alpha_5$ and $g(\varepsilon)=\alpha_{k-1}$, meaning that all the curves from Theorem \ref{thm:Genki} are in one $\langle f,g \rangle$-orbit. 
	\begin{figure}\begin{center}
			\includegraphics[scale=.4]{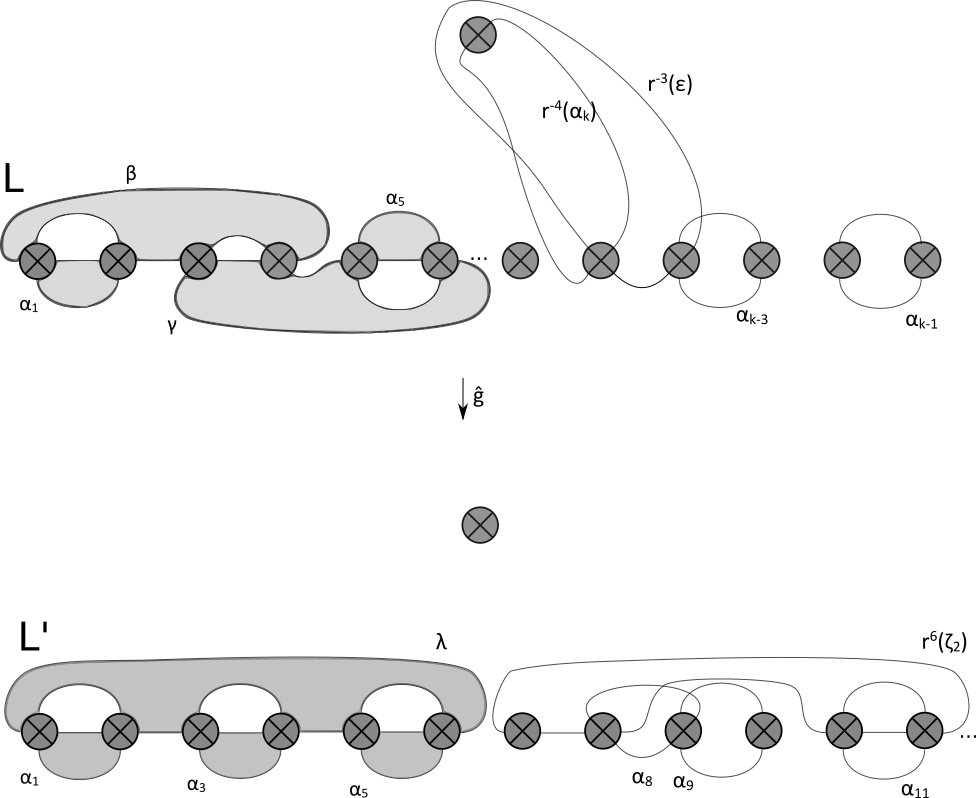}
			\caption{Map $\hat{g}$ on $N_{k+1}$.} 
			\label{fig:nk+1g}
		\end{center}
	\end{figure}

Now suppose that $g=k+1$. Let $M$ be the union of regular neighbourhoods of the $3$-chain $(r^{-4}(\alpha_k),r^{-3}(\epsilon),\alpha_{k-3})$ and the curve $\alpha_{k-1}$, and  let $M'$ be the union of regular neighbourhoods of the $3$-chain $ (\alpha_9,\alpha_{8},r^6(\zeta_2))$ and the curve $\alpha_{11}$
(see Figure \ref{fig:nk+1g}), oriented so that the twists $t_{\alpha_i}$ are right-handed. Let $\hat{g}_M\colon M\to M'$ be an orientation-preserving homeomorphism mapping 
\[(r^{-4}(\alpha_k),r^{-3}(\epsilon),\alpha_{k-3},\alpha_{k-1})\mapsto (\alpha_9,\alpha_{8},r^6(\zeta_2),\alpha_{11}).\]
Since $N_{k+1}\setminus(L\cup M)$ and $N_{k+1}\setminus(L'\cup M')$ are connected and nonorientable,  by Lemma \ref{lem:lantchain}
there exists a homeomorphism $\hat{g}:N_{k+1}\rightarrow N_{k+1}$  equal to $\hat{g}_L$ on $L$ and $\hat{g}_M$ on $M$.

We define $g$ to be the mapping class of $\hat{g}^{-1}r\hat{g}$.
Analogously to the above, it is easy to see that the mapping classes $f^2$, $g^2$, $h=f^{6-k}g^6$ satisfy the assumptions of Lemma \ref{lem1}. Clearly all the curves $\alpha_i$ for $i=1,\dots,{k-1}$ are in one $\langle f\rangle$-orbit, and by the definition of $g$, $g^2(\beta)=\alpha_5$, and $g^3f^{-3}(\varepsilon)=\alpha_{k-1}$ and $g^2f^{-4}(\alpha_k)=\alpha_{k-1}$, meaning that all the curves from Theorem \ref{thm:Genki} are in one $\langle f,g \rangle$-orbit. 

\medskip
\noindent\textbf{Case 2: $p+q\geq 2$.}	We will construct $f$ and $g$ by mapping $N_g$ to $E_g$, performing the rotation $r$, and then mapping back to $N_g$. Recall that $E_g$ was constructed in Lemma \ref{embed} as the connected sum of $p+q$ surfaces (Figure \ref{fig:homembed}). 
We label by $\eta_1,\dots,\eta_{p+q}$ the subsurfaces of $E_g$ corresponding to the connected summands, from bottom to top of the appropriate model in Figure \ref{fig:exembed}.

Let $n=pk+2q(k-1)-1$ so that  either $n=g-1$ or $n=g-2$. Let $F=(\alpha_i)_{i=1}^n$ be the $n$-chain on $N_g$ consisting of the curves $\alpha_i$ from Theorem \ref{thm:Genki} (Figure \ref{fig:genkicurves}; note that $\alpha_{g-1}$ is not included in $F$ if $n=g-2$). We will define a homeomorphism $\hat{f}\colon N_g\to E_g$ taking $F$ to a specific $n$-chain $\widetilde{F}$ on $E_g$ by using Lemma \ref{lem:chains}.

\begin{figure}\centering
		(a)\includegraphics[scale=.35]{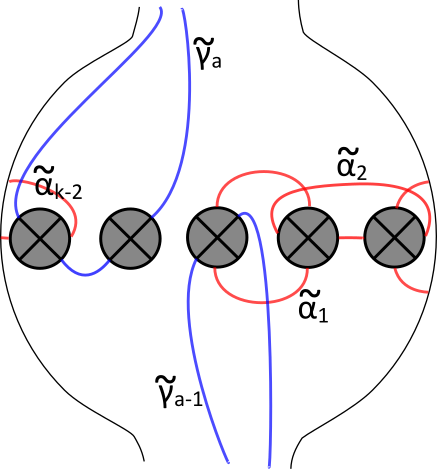}
	(b)\includegraphics[scale=.35]{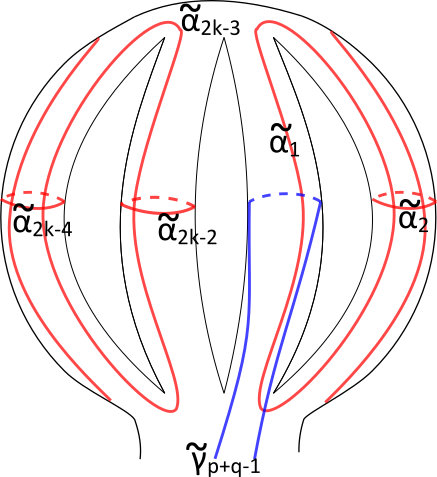}\\
	(c)\includegraphics[scale=.35]{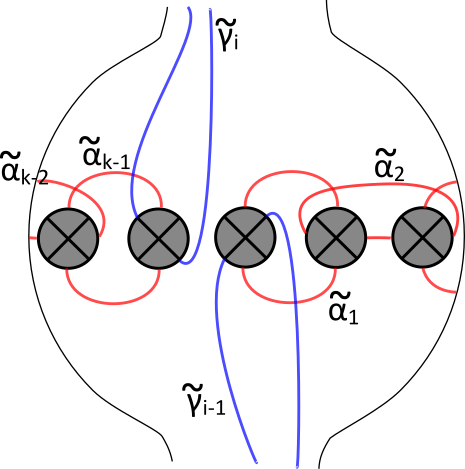}
	(d)\includegraphics[scale=.35]{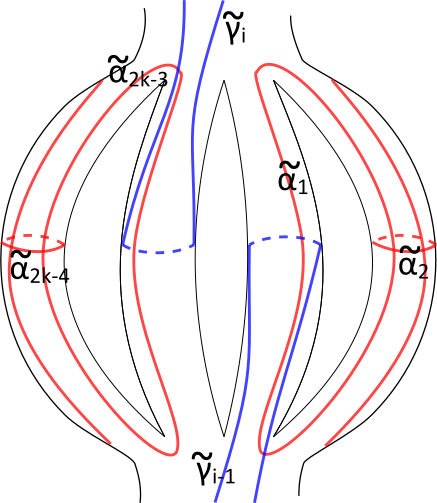}
	\caption{Chain of
		(a) $k-2$ curves in $\eta_{p}$,
		(b) $2k-2$ curves on $\eta_{p+q}$,
		(c) $k-1$ curves in $\eta_i$ for $1\leq i\leq p-1$,
		 (d) $2k-3$ curves in $\eta_i$ for $p+1\leq i\leq p+q-1$.}\label{fig:chainni}
\end{figure}

We begin by defining, for $i=1,\dots,p+q$,  a chain $\widetilde{F}_i=(\tilde{\alpha}_j)_{j=1}^{n_i}$ of curves on the subsurface $\eta_i$,  such that 
\begin{align*}
r(\tilde{\alpha}_j)&=\tilde{\alpha}_{j+1}\quad\textrm{for\ } 1\leq i\leq p\ \textrm{and\ }1\le j\le n_i-1,\\
r(\tilde{\alpha}_j)&=\tilde{\alpha}_{j+2}\quad\textrm{for\ } p+1\leq i\leq p+q\ \textrm{and\ }1\le j\le n_i-2.
\end{align*}
The chains $\widetilde{F}_i$ have lengths $n_i=k-1$ for $1\le i\le p-1$  (Figure \ref{fig:chainni}(c)). If $q=0$ then also $n_p=k-1$, otherwise $ n_p=k-2$ (see Figure \ref{fig:chainni}(a)), $n_i=2k-3$ for $p+1\leq i\leq p+q-1$ (Figure \ref{fig:chainni}(d)) and $n_{p+q}=2k-2$ (Figure \ref{fig:chainni}(b); if $g$ is odd, there is a crosscap on top of $\eta_{p+q}$).
We will use the labels $\tilde{\alpha}_j$ as ``local coordinates'' -- saying, for instance, ``the $\tilde{\alpha}_2$ on $\eta_3$''.

We also define  $p+q-1$ ``connecting curves'' $\widetilde{\gamma}_i$, $i=1,\dots,p+q-1$, each $\widetilde{\gamma}_i$ having geometric intersection number $1$ with the last curve of  $\widetilde{F}_i$ and the first curve of  $\widetilde{F}_{i+1}$ (see Figure \ref{fig:chainni}). The chains $\widetilde{F}_i$ together with the connecting curves $\widetilde{\gamma}_i$ form a nonseparating chain  $\widetilde{F}$ of length $n$. By Lemma \ref{lem:chains}, there exists a homeomorphism $\hat{f}\colon N_g\to E_g$ taking $F$ to $\widetilde{F}$.
Let $f$ be the mapping class of $\hat{f}^{-1}r\hat{f}$. 

For $i=1\dots p+q$ we define chains $F_i=\hat{f}^{-1}(\widetilde{F}_i)$ on $N_g$. We will call $\hat{f}^{-1}(\tilde{\gamma_i})$ the connecting curve between $F_i$ and $F_{i+1}$.

For $2\le i\le p$, we define $G_{i}$ to be the $3$-chain consisting of the last curve of $F_{i-1}$, the connecting curve between $F_{i-1}$ and $F_{i}$, and the first curve of $F_{i}$. 
For $p+1\le i\le p+q$ we define $G_i$ to consist of five curves: the last curve of $F_{i-1}$, the connecting curve between $F_{i-1}$ and $F_i$, first, second and fourth curves of $F_i$. Note that first four curves of $G_i$ form a chain from which the fifth curve is disjoint.

Recall from Case 1 the definition of $\hat{g}_L\colon L\to L'$, where $L$ and $L'$ are four-holed spheres.
Now we treat $L'$ as a subsurface of $\eta_1$ and define $\hat{g}:N_g\rightarrow E_g$ to be a homeomorphism equal to $\hat{g}_L$ on $L$ and mapping

\begin{align*}
(\alpha_7,\alpha_9)&\mapsto (r^6(\zeta_2),r^6(\alpha_5)) \mbox{ in } \eta_{1},\\
G_i &\mapsto (\tilde{\alpha}_1,\tilde{\alpha}_2,\tilde{\alpha}_3) \mbox{ in } \eta_{i},\;2\leq i\leq p\\
G_i &\mapsto (\tilde{\alpha}_1,\tilde{\alpha}_2,\tilde{\alpha}_3,\tilde{\alpha}_4,\tilde{\alpha}_7) \mbox{ in } \eta_{i},\;p+1\leq i\leq p+q\\
(\alpha_{g-3},\epsilon,\alpha_{g-1})&\mapsto \mbox{ last three }\tilde{\alpha_i}\mbox{ in } \eta_{p+q}.
\end{align*}
Similarly as in Case 1, such a map exists by Lemma \ref{lem:lantchain}.
Let $g$ be the mapping class of $\hat{g}^{-1}r\hat{g}$. As in Case 1, $g^2$ maps $(\delta_2,\beta)$ to $(\alpha_3,\alpha_5)$. Observe also that $g^6$ maps $(\delta_1,\gamma)$ to $(\alpha_7,\alpha_9)$ and $f^{-4}$ maps $(\alpha_7,\alpha_9)$ to $(\alpha_3,\alpha_5)$. We define $h=f^{-4}g^6$; then $h$ maps $(\delta_1,\gamma)$ to $(\alpha_3,\alpha_5)$. 

Now $f^2$, $g^2$ and $h$ satisfy the assumptions of Lemma \ref{lem1} and it remains to show that all the  curves from Theorem \ref{thm:Genki} are in the same $H$-orbit for $H=\langle f,g\rangle$.

First note that for $1\leq i\leq p$ all the curves of $F_i$  are in the same $\langle f\rangle$-orbit, while  for $p+1\leq i\leq p+q$ there are two $\langle f \rangle$-orbits of curves in $F_i$. However, since  $g^2$ maps the first curve of $F_i$ to the fourth one for $p+1\le i\le p+q$, all the curves of  each $F_i$ are in one $H$-orbit.  Since $g$ also maps between curves in different $F_i$, the connecting curves and $\alpha_{g-1}$ (which is not included in any $F_i$ for $g$ odd), all the $\alpha_i$ curves are on the same $H$-orbit. Finally, $h$ maps $\beta$ to $\alpha_5$ and a power of $g$ maps $\epsilon$ to a curve in $F_{p+q}$. 
Thus all the curves from Theorem \ref{thm:Genki} are in the same $H$-orbit.

\end{proof}

Proof of Corollary \ref{cor1}:
\begin{proof}
	By Theorem \ref{thm2} if $g=pk+2q(k-1)$ or $g=pk+2q(k-1)+1$ with $p$ odd, then there exists a set of three conjugate elements of order $k$ generating $\mathcal{M}(N_g)$. Let $g\geq 2(k-1)(k-2)+k$ and assume first that $g$ is even. Let $n=(g-k)/2$ and $m=\left\lfloor\frac{n}{k-1}\right\rfloor$. Then
	\[
n=m(k-1)+r\mbox{, } 0\leq r\leq k-2,
\]
\[
n=m(k-1)+r\left(k-(k-1)\right)=(m-r)(k-1)+rk.
\]
Notice that $m-r\geq 0$, as $n\geq (k-1)(k-2)$. Let $p=2r+1$ and $q=(m-r)$, then $g=2n+k=pk+2q(k-1)$.
For $g$ odd the argument is exactly analogous, except with $n=(g-k-1)/2$, and we obtain $g\geq 2(k-1)(k-2)+k+1$. This completes the proof.
\end{proof}

\end{document}